\newcommand\bx{\boldsymbol x}
\newcommand\by{\boldsymbol y}
\newcommand\bE{\boldsymbol E}
\newcommand\bH{\boldsymbol H}
\newcommand\bF{\boldsymbol F}
\newcommand\bJ{\boldsymbol J}
\newcommand\bM{\boldsymbol M}
\newcommand\In{\operatorname{inc}}
\newcommand\Sc{\operatorname{scat}}
\newcommand\bn{\boldsymbol n}
\newtheorem{remark}{Remark}
\newtheorem{definition}{Definition}
\newtheorem{thm}{Theorem}
\newtheorem{lemma}{Lemma}
\newcommand{\Vo}{\mathcal V_0}
\newcommand{\To}{\mathcal T_0}
\newcommand{\Vw}{\mathcal V_\omega}
\newcommand{\Tw}{\mathcal T_\omega}
\title{Integral equation methods for electrostatics, acoustics
and electromagnetics in smoothly varying, anisotropic 
media\footnote{This work was supported in part by
the Spanish Ministry of Science and Innovation under project TEC2016-78028-C3-3-P
and the U.S. Department of Energy under grant DE-FG02-86ER53223.
}
}
\author{
Lise-Marie Imbert-Gerard\footnote{Department of Mathematics, University of Maryland. 
Email: {\tt lmig@math.umd.edu}}, 
Felipe Vico\footnote{Instituto de Telecomunicaciones y Aplicaciones 
Multimedia (ITEAM), Universidad Polit\` ecnica
de Val\` encia, 46022 Val\` encia, Spain. 
{{\em email}:\ {\tt {felipe.vico@gmail.com}}},
{\ {\tt {mferrand@dcom.upv.es}}}
}, 
Leslie Greengard\footnote{Courant Institute, New York University,
  and Flatiron Institute, Simons Foundation, New York, NY. Email:
  {\tt greengard@cims.nyu.edu}}, and
Miguel Ferrando\footnotemark[3]
}
\begin{document}
\maketitle
\begin{abstract}
We present a collection of well-conditioned integral equation methods
for the solution of electrostatic, acoustic or electromagnetic scattering 
problems involving anisotropic, inhomogeneous media.
In the electromagnetic case,
our approach involves a minor modification of a classical formulation. 
In the electrostatic or acoustic setting, 
we introduce a new vector partial differential
equation, from which the desired solution is easily obtained. 
It is the vector equation for which we derive a well-conditioned integral 
equation. In addition to providing a unified framework for these
solvers, we illustrate their performance using iterative solution methods
coupled with the FFT-based technique of \cite{Volintfft} to discretize and 
apply the relevant integral operators.
\end{abstract}

%
%
%

\section{Introduction}

In this paper, we develop fast, high order accurate integral equation methods for
several classes of elliptic partial differential equations (PDEs) in three dimensions
involving anisotropic, inhomogeneous media.
In the electrostatic setting, we consider the 
anisotropic Laplace equation
\begin{equation}\label{Laplace0}
\begin{aligned}
\nabla\cdot\epsilon(\bx)\nabla\phi(\bx)=0,\ \ \bx\in\mathbb{R}^3 \, ,
\end{aligned}
\end{equation}
where $\epsilon(\bx)$ is a real, $3 \times 3$ symmetric matrix, subject to 
certain regularity conditions discussed below. We also assume
$\epsilon(\bx)$ is a compact perturbation of the identity operator $I$ - that is,
$\epsilon(\bx) - I$ has compact support. 
A typical objective is to determine the response of the inclusion to a known, 
applied static field, with the response satisfying 
suitable decay conditions at infinity.

For acoustic or electromagnetic modeling in the frequency domain, 
we consider the anisotropic Helmholtz equation
\begin{equation}\label{Helmholtz0}
\begin{aligned}
\nabla\cdot\epsilon^{-1}(\bx)\nabla\phi(\bx)+\omega^2\phi(\bx)=0,\ \ \bx\in\mathbb{R}^3
\end{aligned}
\end{equation}
and the anisotropic Maxwell equations
\begin{equation}\label{Maxwell0}
\begin{aligned}
\nabla\times\bE(\bx)&=i\omega\mu(\bx)\bH(\bx)\\
\nabla\times\bH(\bx)&=-i\omega\epsilon(\bx)\bE(\bx),\ \ \ \ \ \ \ \ \bx\in\mathbb{R}^3,
\end{aligned}
\end{equation}
respectively.
Here, $\epsilon(\bx)$ and $\mu(\bx)$ are complex-valued $3 \times 3$ matrices, 
subject to regularity and spectral properties to be discussed in detail. 
We again assume that $\epsilon(\bx)$ and $\mu(\bx)$ are compact perturbation of the 
identity.
A typical objective is to determine the response of the inclusion to an impinging
acoustic or electromagnetic wave, with the response satisfying 
suitable radiation conditions at infinity.
For a thorough discussion of the origins and applications of these problems, we
refer the reader to the textbooks \cite{CHEW,JACKSON,LANDAU}.

Instead of discretizing the partial differential equations (PDEs) themselves,
we will develop integral representations of the solution that satisfy
the outgoing decay/radiation conditions exactly, avoiding the need for
truncating the computational domain and imposing approximate outgoing boundary 
conditions.
The resulting integral equations will be shown to involve equations
governed by operators of the form $I+B+K$ where $I$ is the identity, 
$B$ is a linear contraction mapping, $K$ is compact. 
A simple argument based on the Neumann series will allow us to extend the 
Fredholm alternative to this setting (and therefore to prove existence for 
the original, anisotropic, elliptic PDEs themselves). 
Moreover, our formulations permit
high-order accurate discretization, and FFT-based acceleration on uniform grids. 
In the electromagnetic case,
our approach is closely related to some classical formulations. 
For the Laplace and Helmholtz equations, however, our approach appears to 
be new and depends on the construction of a {\em vector} PDE
from which the desired solution is easily obtained. It is the vector PDE
for which we will derive a new, well-conditioned integral 
equation. 
One purpose of the present paper is to describe all of these solvers in a 
unified framework. Given that resonance-free second kind integral equations 
are typically well-conditioned, they are suitable for solution using simple iterative
methods such as GMRES and BICGSTAB without any preconditioner. 
We use the method of \cite{Volintfft} to discretize and apply the integral operators
with high order accuracy and demonstrate the performance of our scheme with several
numerical examples.
 
We will use the language of scattering theory throughout. Thus, 
for the scalar equations, we write 
$\phi=\phi^{\In}+\phi^{\Sc}$ where $\phi^{\In}$ is a known function that satisfies 
the homogeneous, isotropic Laplace or Helmholtz equation in free space away from 
sources. In the electrostatic case, 
$\phi^{\Sc}$ is assumed to satisfy the decay condition
\begin{equation}
\label{Laplace_Scalar_RC}
\lim_{r \rightarrow \infty} \phi^{\Sc} = o(1),
\end{equation}
where $r = |\bx|$.
In the acoustic case \cite{CK1},
$\phi^{\Sc}$ is assumed to satisfy the Sommerfeld radiation condition
\begin{equation}
\label{helmrad}
\lim_{r \rightarrow \infty} r \left(\frac{\partial \phi^{\Sc}}{\partial r} 
- i\omega \phi^{\Sc} \right) = 0.
\end{equation}
In the electromagnetic setting
(\ref{Maxwell0}), we write $\bE=\bE^{\In}+\bE^{\Sc}$ and 
$\bH=\bH^{\In}+\bH^{\Sc}$, with $\bE^{\In},\bH^{\In}$ corresponding to a 
known solution of the homogeneous, isotropic Maxwell equations 
in free space away from sources.
$\bE^{\Sc},\bH^{\Sc}$ are assumed to satisfy the Silver-M\"uller radiation condition 
\cite{CK1}: 
\begin{equation}
\label{silverrad}
\lim_{r \rightarrow \infty} \left(\bH^{\Sc} \times {\bx} - r \bE^{\Sc} \right) = 0.
\end{equation}

\section{The anisotropic Laplace equation}

We first consider the electrostatic problem
\eqref{Laplace0},
where $\epsilon(\bx)$ is a real, symmetric, positive definite $3\times3$ matrix 
with bounded entries, such that $\epsilon(\bx)-I$ has compact 
support. We assume that 
the smallest eigenvalue  of $\epsilon(\bx)$ is bounded away from zero,
so that the PDE is uniformly elliptic (see \cite{evansPDE}, p.294).

Letting $\phi=\phi^{\In}+\phi^{\Sc}$, we assume that
$\phi^{\In}(\bx)$ is a given ``applied field" with
$\Delta \phi^{\In}(\bx) = 0$ in the support of $\epsilon(\bx)-I$. 

\begin{definition} By the {\em anisotropic electrostatic scattering
problem} (or simply the {\em electrostatic scattering problem}) we mean the 
calculation of a function 
$\phi^{\Sc}(\bx)\in H^1_{loc}(\mathbb{R}^3)$ that satisfies the 
radiation condition \eqref{Laplace_Scalar_RC} and the equation
\begin{equation}\label{Laplace_Scalar}
\begin{aligned}
\nabla\cdot\epsilon(\bx)\nabla\phi^{\Sc}(\bx)=-\nabla\cdot(\epsilon(\bx)-I)\nabla 
\phi^{\In}(\bx)
\end{aligned}
\end{equation}
for $\bx\in\mathbb{R}^3$.
Given this function $\phi^{\Sc}$, $\phi=\phi^{\In}+\phi^{\Sc}$ clearly
satisfies the eq. \eqref{Laplace0}.
\end{definition}

Because of the mild regularity assumptions made on $\epsilon(\bx)$ and on the 
solution $\phi^{\Sc}$, we must interpret eq. \eqref{Laplace_Scalar} in a weak sense (eq. \eqref{Laplace_Scalar_Weak} below).

We begin by proving a uniqueness result.

\begin{thm}\label{Unique_Laplace_Scalar}
The anisotropic electrostatic scattering problem has at most one solution.
\end{thm}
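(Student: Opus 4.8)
The plan is to reduce to the homogeneous problem and then run a Dirichlet-energy argument. Suppose $\phi_1^{\Sc}$ and $\phi_2^{\Sc}$ both solve the electrostatic scattering problem with the same applied field $\phi^{\In}$, and set $w=\phi_1^{\Sc}-\phi_2^{\Sc}\in H^1_{loc}(\mathbb{R}^3)$. Then $w$ is a weak solution of the \emph{homogeneous} equation $\nabla\cdot\epsilon(\bx)\nabla w=0$ and satisfies the decay condition $w=o(1)$ as $r\to\infty$. The goal is to show $w\equiv 0$.

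The first real step, and the one I expect to be the main technical point, is to upgrade the decay. Let $B_{R_0}$ be a ball containing the support of $\epsilon(\bx)-I$. On $\mathbb{R}^3\setminus\overline{B_{R_0}}$ we have $\epsilon(\bx)=I$, so there $w$ is weakly harmonic and hence, by Weyl's lemma and interior elliptic regularity, smooth. A harmonic function on an exterior domain satisfying $w=o(1)$ at infinity admits a convergent spherical-harmonic (multipole) expansion whose leading term is $O(1/r)$; consequently $w=O(1/r)$ and $\nabla w=O(1/r^2)$ as $r\to\infty$. In particular $|\nabla w|^2=O(1/r^4)$ is integrable near infinity, so the Dirichlet energy $\int_{\mathbb{R}^3}\nabla w\cdot\epsilon(\bx)\nabla w\,d\bx$ is finite.

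Next I would localize and integrate by parts. Since $w$ is a weak solution with smooth coefficients and is itself smooth near $\partial B_R$ for $R>R_0$, introducing a smooth cutoff to render $w$ an admissible test function yields, for each such $R$, the identity
$$\int_{B_R}\nabla w\cdot\epsilon(\bx)\nabla w\,d\bx=\int_{\partial B_R} w\,\bigl(\epsilon(\bx)\nabla w\bigr)\cdot\bn\,dS.$$
Because $\epsilon=I$ on $\partial B_R$, the boundary integrand equals $w\,\partial_r w=O(1/R^3)$, while $|\partial B_R|=O(R^2)$, so the right-hand side is $O(1/R)$. Letting $R\to\infty$, the left side converges to the finite full-space energy and the right side tends to $0$, giving $\int_{\mathbb{R}^3}\nabla w\cdot\epsilon(\bx)\nabla w\,d\bx=0$.

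Finally I would invoke uniform ellipticity. Since the smallest eigenvalue of $\epsilon(\bx)$ is bounded below by some $c>0$, the integrand dominates $c\,|\nabla w|^2$, so $\nabla w=0$ almost everywhere and $w$ is constant on $\mathbb{R}^3$. The decay $w=o(1)$ then forces that constant to be zero, whence $\phi_1^{\Sc}=\phi_2^{\Sc}$ and the solution is unique.
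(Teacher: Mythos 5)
Your argument is correct, and it is a genuine variant of the paper's proof rather than a reproduction of it. Both proofs rest on the same energy identity, namely testing the weak formulation against the (homogeneous) solution itself over a ball $B_R$ containing the support of $\epsilon-I$ and controlling the resulting boundary flux term. The difference lies entirely in how that boundary term is handled. The paper keeps $R$ fixed and expresses the flux through the exterior Dirichlet-to-Neumann map $T$, whose spherical-harmonic diagonalization $T[Y_{nm}]=-(n+1)Y_{nm}$ shows that $\int_{\partial B_R}\phi^{\Sc}\,T[\phi^{\Sc}]\,dS\le 0$; combined with uniform ellipticity this forces the energy to vanish, and the $n=0$ mode then kills the residual constant. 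You instead upgrade the decay ($w=O(1/r)$, $\nabla w=O(1/r^2)$ from the exterior multipole expansion) and let $R\to\infty$, so the boundary term simply vanishes in the limit. The two mechanisms encode the same information about exterior harmonic functions; your route avoids introducing the DtN operator at the cost of the decay lemma, while the paper's fixed-radius argument is the one that generalizes directly to the Helmholtz case (Theorem \ref{Unique_Helmholtz_scalar}), where the boundary term does not tend to zero and only a sign condition on its imaginary part plus Rellich's lemma is available. A minor further difference: your limiting argument concludes $w\equiv 0$ on all of $\mathbb{R}^3$ in one step, whereas the paper's argument concludes on $B_R$ and leaves implicit the (easy) propagation to the exterior.
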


\begin{proof}
Let $B_R$ be an open ball centered at the origin that contains 
the support of $\epsilon(\bx)-I$, and let $T$ denote the 
Dirichlet-to-Neumann (DtN) map for harmonic functions in the exterior of $B_R$. 
Integrating \eqref{Laplace_Scalar} by parts, we obtain the weak formulation
on $B_R$:
\begin{equation} \label{Laplace_Scalar_Weak}
\int_{B_R}\nabla\psi(\bx)\epsilon(\bx)\nabla\phi^{\Sc}(\bx) dV =\int_{\partial B_R}\psi T[\phi^{\Sc}] dS-\int_{B_R}\nabla\psi(\bx)(\epsilon-I)(\bx)\nabla\phi^{\In}(\bx) dV
\end{equation}
for $\psi\in H^1(B_R)$.
Assuming no ``incoming" data ($\phi^{\In} =0$) and using
$\psi=\phi^{\Sc}$ itself as a testing function yields
\begin{equation}\label{eq_estimate}
\int_{B_R}\nabla\phi^{\Sc}(\bx)\epsilon(\bx)\nabla\phi^{\Sc}(\bx) dV =\int_{\partial B_R}\phi^{\Sc} T[\phi^{\Sc}] dS.
\end{equation}

From the uniform ellipticity of $\epsilon(\bx)$, for some constant $C>0$ we have
\begin{equation}\label{eq_estimate2}
C\int_{B_R}|\nabla\phi^{\Sc}(\bx)|^2dV \le\int_{\partial B_R}\phi^{\Sc}T[\phi^{\Sc}] dS.
\end{equation}

On the surface of $B_R$, the 
DtN map can be computed in the spherical harmonics basis, with 
\[ T[Y_{nm}](\theta,\phi)=-(n+1)Y_{nm}(\theta,\phi). \]
From this, it is straightforward to see that
\begin{equation}
\int_{B_R}|\nabla\phi^{\Sc}(\bx)|^2dV=0 \Rightarrow \phi^{\Sc}(\bx)=c\ 
 \forall \bx \in B_R.
\end{equation}
Using (\ref{eq_estimate2}), we have
\begin{equation}
0\le\int_{\partial B_R}\phi^{\Sc} T[\phi^{\Sc}] dS=\int_{\partial B_R}c T[c] dS
=-|c|^2 A,
\end{equation}
where $A$ is the surface area of $B_R$.
Thus, $c=0$, yielding the desired result.
\end{proof}

\begin{remark}
While the standard proof of existence for the anisotropic Laplace equation
relies on the Lax-Milgram theorem \cite{evansPDE}, we turn now to an alternate
approach, based on deriving an auxiliary vector PDE and a corresponding, 
well-conditioned, Fredholm integral equation. 
\end{remark}

Note first that
eq. \eqref{Laplace_Scalar} can be rewritten in the form
\begin{equation}
\nabla\cdot\epsilon\nabla\phi^{\Sc}=\Delta \phi^{\Sc}+\nabla\cdot(\epsilon-I)\nabla\phi^{\Sc}=-\nabla\cdot\epsilon\nabla\phi^{\In}=-\nabla\cdot(\epsilon-I)\nabla\phi^{\In},
\end{equation}
since $\phi^{\In}$ is harmonic in $B_R$, a region which contains
the support of  $\epsilon(\bx)-I$.
Recall that $\phi^{\Sc}$ satisfies the Laplace decay condition 
\eqref{Laplace_Scalar_RC}. 

Suppose now that
$\bF$ is a vector field such that $\nabla\cdot\bF=\phi^{\Sc}$
and satisfies \eqref{Laplace_Scalar_RC}.
Note, however, that $\nabla \times \bF$ is, as yet, unspecified. 
Then, $\bF$ satisfies the following equation:
\begin{equation}
\Delta \nabla\cdot\bF+\nabla\cdot(\epsilon-I)\nabla\nabla\cdot\bF=-\nabla\cdot(\epsilon-I)\nabla\phi^{\In},
\end{equation}
or
\begin{equation}\label{div_aug}
\nabla\cdot\Delta \bF+\nabla\cdot(\epsilon-I)\nabla\nabla\cdot\bF=-\nabla\cdot(\epsilon-I)\nabla\phi^{\In}.
\end{equation}	
We now define the auxiliary equation for $\bF$ (which will determine its curl):
\begin{equation}\label{aug_anisotrop}
\Delta \bF+(\epsilon-I)\nabla\nabla\cdot\bF=-(\epsilon-I)\nabla\phi^{\In} \, .
\end{equation}
Since eq. (\ref{div_aug}) is obtained by taking the divergence of 
(\ref{aug_anisotrop}), it is natural to introduce the following definition.

\begin{definition} By the {\em vector electrostatic scattering problem}
we mean the calculation 
of a vector function 
$\mathbf{F}^{\Sc}(\bx)\in H^2_{loc}(\mathbb{R}^3)$ that satisfies
\begin{equation}\label{Laplace_Vector}
\begin{aligned}
\Delta \mathbf{F}^{\Sc}+(\epsilon-I)\nabla\nabla\cdot\mathbf{F}^{\Sc}=-(\epsilon-I)\nabla\nabla\cdot\mathbf{F}^{\In} \\
\end{aligned}
\end{equation}
and the standard decay condition at infinity:
\begin{equation}\label{Laplace_Vector_RC}
\begin{aligned}
\mathbf{F}^{\Sc}(\bx)=o(1), \ \ {\rm as}\ |\bx|\rightarrow \infty,
\end{aligned}
\end{equation}
uniformly in all directions. Here, $\mathbf{F}^{\In}(\bx)$ is an $H^2$ function 
defined on an open set that contains the support of $\epsilon(\bx)-I$,
where it satisfies $\Delta \mathbf{F}^{\In}(\bx) = 0$.
\end{definition}

Due to the regularity properties imposed on $\mathbf{F}^{\Sc}$ and the lack of 
derivatives acting on the entries of $\epsilon_{ij}(\bx)$, 
eq. (\ref{Laplace_Vector_RC}) does not need to be interpreted in a weak sense.
We now establish a relation between the vector and scalar problems.

\begin{lemma}\label{Lemma_Laplace_div}
If $\mathbf{F}^{\Sc}(\bx)\in H^2_{loc}(\mathbb{R}^3)$ satisfied the 
vector electrostatic scattering problem, then 
$\phi^{\Sc}(\bx):=\nabla\cdot\mathbf{F}^{\Sc}(\bx)\in H^1_{loc}(\mathbb{R}^3)$ and
satisfies the anisotropic electrostatic scattering problem with 
right hand side given by the 
incoming field $\phi^{\In}=\nabla\cdot \mathbf{F^{\In}}$.
\end{lemma}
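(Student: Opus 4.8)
The plan is to obtain the scalar problem simply by applying the divergence operator to the vector equation \eqref{Laplace_Vector}, so that the proof reduces to checking that each of the three defining properties of the electrostatic scattering problem survives this operation. The regularity claim is immediate: if $\mathbf{F}^{\Sc}\in H^2_{loc}(\mathbb{R}^3)$, then $\phi^{\Sc}=\nabla\cdot\mathbf{F}^{\Sc}\in H^1_{loc}(\mathbb{R}^3)$, since taking one divergence costs exactly one derivative. For the same reason, \eqref{Laplace_Vector} is an identity in $L^2_{loc}$, which is what I would need to manipulate it distributionally.

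First I would recover the PDE. Taking the distributional divergence of \eqref{Laplace_Vector} and using the elementary commutations $\nabla\cdot\Delta\mathbf{F}^{\Sc}=\Delta\nabla\cdot\mathbf{F}^{\Sc}=\Delta\phi^{\Sc}$ and $\nabla\nabla\cdot\mathbf{F}^{\Sc}=\nabla\phi^{\Sc}$ (with the analogous identity for $\mathbf{F}^{\In}$, recalling $\phi^{\In}=\nabla\cdot\mathbf{F}^{\In}$) yields
\[
\Delta\phi^{\Sc}+\nabla\cdot(\epsilon-I)\nabla\phi^{\Sc}=-\nabla\cdot(\epsilon-I)\nabla\phi^{\In}.
\]
The left-hand side is precisely $\nabla\cdot\epsilon\nabla\phi^{\Sc}$, so this is exactly \eqref{Laplace_Scalar}. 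Since $\mathbf{F}^{\In}$ is harmonic on a neighborhood of the support of $\epsilon-I$, the companion fact $\Delta\phi^{\In}=\nabla\cdot\Delta\mathbf{F}^{\In}=0$ holds there, so $\phi^{\In}$ is an admissible incoming field. To reach the weak form \eqref{Laplace_Scalar_Weak}, I would then pair this identity with a test function $\psi\in H^1(B_R)$ and integrate the Laplacian term by parts.

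It remains to handle the boundary term and the decay condition \eqref{Laplace_Scalar_RC} together. Outside $B_R$ the coefficient $\epsilon-I$ vanishes, so \eqref{Laplace_Vector} collapses to $\Delta\mathbf{F}^{\Sc}=0$; hence every component of $\mathbf{F}^{\Sc}$, and therefore $\phi^{\Sc}=\nabla\cdot\mathbf{F}^{\Sc}$ as well, is harmonic in $\mathbb{R}^3\setminus\overline{B_R}$. This exterior harmonicity is what lets the integration by parts close: the flux $\partial_n\phi^{\Sc}$ on $\partial B_R$ (where $\epsilon=I$) equals $T[\phi^{\Sc}]$ by the very definition of the exterior DtN map, converting the boundary term into $\int_{\partial B_R}\psi\,T[\phi^{\Sc}]\,dS$ as required.

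The step I expect to be the main obstacle is the decay condition. We are only given $\mathbf{F}^{\Sc}=o(1)$, and the decay of a vector field does not by itself control the decay of its divergence. The remedy is to exploit the exterior harmonicity established above: a harmonic function on $\mathbb{R}^3\setminus\overline{B_R}$ that is $o(1)$ admits a multipole expansion whose slowest-decaying term is $O(1/r)$, and differentiation improves the decay by one order, so $\phi^{\Sc}=\nabla\cdot\mathbf{F}^{\Sc}=O(1/r^2)=o(1)$. This simultaneously justifies applying the DtN map to the decaying exterior solution in the previous step, completing the verification that $\phi^{\Sc}$ solves the anisotropic electrostatic scattering problem.
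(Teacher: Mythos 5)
Your argument is correct, but it is organized differently from the paper's, and the difference is worth noting. The paper deliberately refuses to take the divergence of \eqref{Laplace_Vector} when $\mathbf{F}^{\Sc}$ is only $H^2_{loc}$ (it remarks that the naive interchange of operators would require $H^3_{loc}$, e.g.\ when $\epsilon_{ij}$ has jumps); instead it multiplies the $L^2$ identity by $\nabla\psi$, splits $\Delta\mathbf{F}^{\Sc}=-\nabla\times\nabla\times\mathbf{F}^{\Sc}+\nabla\nabla\cdot\mathbf{F}^{\Sc}$, and uses the identity \eqref{IDN} so that the only term ever integrated by parts is the curl--curl term, which lands entirely on $\partial B_R$ and is then converted to $T[\nabla\cdot\mathbf{F}^{\Sc}]$ using $\Delta\mathbf{F}^{\Sc}=0$ near the boundary. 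You instead take the divergence \emph{distributionally}, which is always legitimate: $\nabla\cdot\Delta\mathbf{F}^{\Sc}=\Delta\phi^{\Sc}$ holds in $\mathcal D'$, and $(\epsilon-I)\nabla\nabla\cdot\mathbf{F}^{\Sc}$ is an $L^2$ field whose distributional divergence is exactly what the weak form \eqref{Laplace_Scalar_Weak} encodes, so the paper's stated obstruction does not apply to your version of the step. The one place where your sketch is looser than it should be is the passage from the distributional identity on $\mathbb{R}^3$ to the weak formulation on $B_R$ tested against $\psi\in H^1(B_R)$: a distribution cannot be paired directly with such a $\psi$, so you need either to extend $\psi$ to a compactly supported $H^1(\mathbb{R}^3)$ function and use Green's identity in the annulus where $\phi^{\Sc}$ is harmonic, or to observe that $\bG:=\epsilon\nabla\phi^{\Sc}+(\epsilon-I)\nabla\phi^{\In}$ is a divergence-free $L^2$ field, smooth near $\partial B_R$, and apply the $H(\operatorname{div})$ divergence theorem. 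You do supply the essential ingredient for this (near-boundary and exterior harmonicity of $\phi^{\Sc}$, which identifies the normal trace with $T[\phi^{\Sc}]$), so this is a matter of writing out one more step rather than a gap. Your treatment of the decay condition --- exterior harmonicity plus the multipole expansion giving $\phi^{\Sc}=O(1/r^2)$ --- is the same conclusion the paper reaches via the Colton--Kress representation theorems at $k=0$, and you are right to flag it as the step that does not follow from $\mathbf{F}^{\Sc}=o(1)$ alone. What your route buys is economy (no need for the auxiliary identity \eqref{IDN}); what the paper's buys is that it never leaves the realm of genuine integrals, which makes the appearance of the DtN boundary term completely explicit.
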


\begin{proof}
If we assume that $\mathbf{F}^{\Sc}(\bx)\in H^3_{loc}(\mathbb{R}^3)$, 
then the result follows immediately by taking the divergence of 
eq. (\ref{Laplace_Vector}) and interchange the order of the operators.
In general, however, we cannot assume such regularity for $\mathbf{F}^{\Sc}(\bx)$, 
in particular if there are discontinuities in the entries $\epsilon(\bx)_{ij}$.

Thus, for the general case, we let
$B_R$ be an open ball centered at the origin that contains the support of 
$\epsilon(\bx)-I$. In the region $\mathbb{R}^3 \backslash B_R$, the governing 
differential operator is the isotropic Laplacian $\Delta$ 
and, by standard regularity results
(see, for example, Corollary 8.11 in \cite{elliptic_pde}), 
the solution $\mathbf{F}_i^{\Sc} \in C^{\infty}$. Thus, we can interpret the 
radiation condition in the strong sense. We can now apply the representation theorems 
4.11 and 4.13 from \cite{CK1} in the region $\mathbb{R}^3 \backslash B_R$, in the 
limit $k=0$. From this, it follows that
$\nabla\cdot \bF^{\Sc}=O\left(\frac{1}{|\bx|^2}\right)$ uniformly in all directions. 
Thus, $\phi^{\Sc}(\bx) = \nabla\cdot \bF^{\Sc}$ satisfies the 
radiation condition \eqref{Laplace_Scalar_RC} for the 
anisotropic electrostatic scattering problem.

Let $\psi\in H^{1}(B_R)$. From eq. (\ref{Laplace_Vector}), we can write
\begin{equation}
\begin{aligned}
\nabla\psi\cdot\Big(\Delta \mathbf{F}^{\Sc}+(\epsilon-I)\nabla\nabla\cdot\mathbf{F}^{\Sc}\Big)=-\nabla\psi(\bx)\cdot(\epsilon-I)\nabla\nabla\cdot\mathbf{F}^{\In}.
\end{aligned}
\end{equation}
Using the vector identity $\Delta \mathbf{F}^{\Sc}=-\nabla\times\nabla\times\mathbf{F}^{\Sc}+\nabla\nabla\cdot\mathbf{F}^{\Sc}$, we have
\begin{equation}
\begin{aligned}
-\nabla\psi\cdot\nabla\times\nabla\times\mathbf{F}^{\Sc}+\nabla\psi\cdot\epsilon\nabla\nabla\cdot\mathbf{F}^{\Sc}=-\nabla\psi\cdot(\epsilon-I)\nabla\nabla\cdot\mathbf{F}^{\In}.
\end{aligned}
\end{equation}
Integrating over the ball $B_R$, we obtain
\begin{equation}\label{prec}
\begin{aligned}
\int_{B_R}-\nabla\psi\cdot\nabla\times\nabla\times\mathbf{F}^{\Sc}dV+&\int_{B_R}\nabla\psi\cdot\epsilon\nabla\nabla\cdot\mathbf{F}^{\Sc}dV=\\
-&\int_{B_R}\nabla\psi\cdot(\epsilon-I)\nabla\nabla\cdot\mathbf{F}^{\In}dV.
\end{aligned}
\end{equation}

We will make use of the
following identity that holds for every function  $\mathbf{F}\in H^2(B_R)$, $\psi\in H^1(B_R)$ and is straightforward to derive from the divergence theorem:
\begin{equation}\label{IDN}
\begin{aligned}
\int_{B_R} \nabla\psi\cdot\nabla\times\nabla\times\mathbf{F}dV=\int_{\partial B_R}\psi\bn\cdot\nabla\times\nabla\times\mathbf{F}dS.
\end{aligned}
\end{equation}
Combining \eqref{IDN} and \eqref{prec}, we obtain
\begin{equation}
\begin{aligned}
\int_{B_R}\nabla\psi\cdot\epsilon\nabla\nabla\cdot\mathbf{F}^{\Sc}dV&=\int_{\partial B_R}\psi\bn\cdot\nabla\times\nabla\times\mathbf{F}^{\Sc}dV\\
-&\int_{B_R}\nabla\psi\cdot(\epsilon-I)\nabla\nabla\cdot\mathbf{F}^{\In}dV.
\end{aligned}
\end{equation}
Since $\Delta\mathbf{F}^{\Sc}(\bx)=0$ for $\bx\in \partial B_R$, we may write
\begin{equation}
\int_{\partial B_R}\psi\bn\cdot\nabla\times\nabla\times\mathbf{F}^{\Sc}dV
=\int_{\partial B_R}\psi\bn\cdot\nabla\nabla\cdot\mathbf{F}^{\Sc}dV.
\end{equation}
Thus, 
\begin{equation}
\begin{aligned}
\int_{B_R}\nabla\psi(\bx)\cdot\epsilon\nabla\nabla\cdot\mathbf{F}^{\Sc}dV=\int_{\partial B_R}\psi T[\nabla\cdot \mathbf{F}^{\Sc}]dS-\int_{B_R}\nabla\psi(\bx)\cdot(\epsilon-I)\nabla\nabla\cdot\mathbf{F}^{\In}dV.
\end{aligned}
\end{equation}

In short,
for $\phi^{\In}:=\nabla\cdot \mathbf{F}^{\In}$, 
$\phi^{\Sc}:=\nabla\cdot\mathbf{F}^{\Sc}$ satisfes \eqref{Laplace_Scalar_Weak}, the 
weak version of the anisotropic electrostatic scattering problem.
\end{proof}

\begin{thm}[Uniqueness]
The vector electrostatic scattering problem has at most one solution.
\end{thm}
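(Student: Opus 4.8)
The plan is to reduce the vector uniqueness statement to the scalar uniqueness result (Theorem \ref{Unique_Laplace_Scalar}) together with a Liouville-type argument. First I would suppose that $\mathbf{F}_1^{\Sc}$ and $\mathbf{F}_2^{\Sc}$ are two solutions of the vector electrostatic scattering problem sharing the same incoming field $\mathbf{F}^{\In}$, and set $\mathbf{G} := \mathbf{F}_1^{\Sc} - \mathbf{F}_2^{\Sc} \in H^2_{loc}(\mathbb{R}^3)$. By linearity, the common right-hand side $-(\epsilon-I)\nabla\nabla\cdot\mathbf{F}^{\In}$ cancels, so $\mathbf{G}$ satisfies the homogeneous equation
\[
\Delta\mathbf{G} + (\epsilon - I)\nabla\nabla\cdot\mathbf{G} = 0
\]
together with the decay condition $\mathbf{G}(\bx) = o(1)$ as $|\bx| \to \infty$.

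The first main step is to show that $\nabla\cdot\mathbf{G} \equiv 0$. By Lemma \ref{Lemma_Laplace_div}, each scalar field $\phi_j^{\Sc} := \nabla\cdot\mathbf{F}_j^{\Sc}$ lies in $H^1_{loc}(\mathbb{R}^3)$ and solves the scalar anisotropic electrostatic scattering problem with the \emph{same} data, namely $\phi^{\In} = \nabla\cdot\mathbf{F}^{\In}$. Since the scalar problem has at most one solution (Theorem \ref{Unique_Laplace_Scalar}), we conclude $\phi_1^{\Sc} = \phi_2^{\Sc}$, that is, $\nabla\cdot\mathbf{G} = 0$ throughout $\mathbb{R}^3$.

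The second step exploits this to collapse the vector PDE. Since $\nabla\cdot\mathbf{G} \equiv 0$, the term $(\epsilon - I)\nabla\nabla\cdot\mathbf{G}$ vanishes identically, and the homogeneous equation reduces to $\Delta\mathbf{G} = 0$ on all of $\mathbb{R}^3$. Hence each Cartesian component $G_i$ is globally harmonic; by interior elliptic regularity it is in fact smooth. Invoking the decay condition $G_i(\bx) = o(1)$ at infinity together with the Liouville theorem for harmonic functions (a harmonic function on $\mathbb{R}^3$ that tends to zero at infinity is identically zero), we obtain $\mathbf{G} = 0$, and therefore $\mathbf{F}_1^{\Sc} = \mathbf{F}_2^{\Sc}$.

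The step I expect to require the most care is the first one, namely the clean invocation of Lemma \ref{Lemma_Laplace_div}. That lemma already supplies both the requisite decay, $\nabla\cdot\mathbf{F}_j^{\Sc} = O(1/|\bx|^2)$, and the weak-form identity \eqref{Laplace_Scalar_Weak} needed to feed into scalar uniqueness, so the only thing to verify is that the two vector solutions are being compared against identical incoming data so that their divergences solve one and the same scalar problem. Once $\nabla\cdot\mathbf{G}=0$ is in hand, the remainder is a routine Liouville observation; one should simply confirm that $\Delta\mathbf{G}=0$ on the whole space upgrades $\mathbf{G}$ from $H^2_{loc}$ to a genuinely harmonic (hence smooth) field before the classical decay-plus-Liouville conclusion is applied.
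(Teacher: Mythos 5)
Your proposal is correct and follows essentially the same route as the paper's own proof: reduce to the scalar problem via Lemma \ref{Lemma_Laplace_div} and Theorem \ref{Unique_Laplace_Scalar} to get $\nabla\cdot\mathbf{G}=0$, then observe the equation collapses to $\Delta\mathbf{G}=0$ and conclude by decay plus Liouville. The only cosmetic difference is that you subtract two solutions explicitly, whereas the paper works directly with a solution of the homogeneous equation.
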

\begin{proof}
Let $\bF^{\Sc}\in H^2_{loc}(\mathbb{R}^3)$ be a solution of the homogeneous equation 
\begin{equation}\label{aug_anisotrop_homog}
\Delta \bF^{\Sc}+(\epsilon-I)\nabla\nabla\cdot\bF^{\Sc}=0
\end{equation}
that satisfies the radiation condition.
By Lemma \ref{Lemma_Laplace_div}, $\nabla\cdot\mathbf{F}^{\Sc}$ satisfies 
the homogeneous weak formulation  (\ref{Laplace_Scalar_Weak}) and, 
as a consequence, using Theorem \ref{Unique_Laplace_Scalar}, 
$\nabla\cdot\mathbf{F}^{\Sc}=0$. From eq. \eqref{aug_anisotrop_homog},
it then follows that $\Delta\mathbf{F}^{\Sc}(\bx)=0$ for all $\bx \in \mathbb{R}^3$, 
so that $\mathbf{F}^{\Sc}=0$ for all $\bx \in \mathbb{R}^3$.
\end{proof}

We turn now to the question of existence, for which we 
define the volume integral operator
\begin{equation}\label{Vo}
\Vo(\bJ):=\int_{\mathbb{R}^3} \frac{1}{4\pi|\bx-\by|}\bJ(\by)dV_{\by}. 
\end{equation}
It is well-known (see, for example, Theorem 8.2 in \cite{CK2}) that
$\Vo:L^2(\Omega)\rightarrow H^2(\Omega)$ is a continuous map
for any bounded open set $\Omega\subset\mathbb{R}^3$ and that
\begin{equation}
\begin{aligned}
\Delta \Vo(\bJ)&=-\bJ. \\
\end{aligned}
\end{equation}
We define the operator $\To$ by
\begin{equation}\label{Todef}
\To(\bJ):=\frac{\bJ}{2}+\nabla\nabla\cdot \Vo(\bJ).
\end{equation}

\begin{lemma}\label{lemma1}
Let $H_{\epsilon}$ denote the operator mapping 
$L^2(\mathbb{R}^3) \rightarrow L^2(\mathbb{R}^3)$ with
\begin{equation} \label{rhodef}
H_{\epsilon}(\bx)\bJ(\bx) = (\epsilon(\bx)+I)^{-1}(\epsilon(\bx)-I) \bJ(\bx) .
\end{equation}
Then, 
$ \| H_{\epsilon}(\bx) \|_2 < 1$.
\end{lemma}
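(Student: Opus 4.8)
The plan is to reduce the statement about the operator norm on $L^2$ to a pointwise statement about the spectral norm of the $3\times 3$ matrix $M(\bx):=(\epsilon(\bx)+I)^{-1}(\epsilon(\bx)-I)$, and then to upgrade the pointwise bound to a \emph{uniform} one using the spectral hypotheses on $\epsilon$. Since $H_\epsilon$ acts simply as multiplication by the matrix-valued function $M(\bx)$, its operator norm on $L^2(\mathbb{R}^3)$ equals $\operatorname{ess\,sup}_{\bx}\|M(\bx)\|_2$, where $\|M(\bx)\|_2$ denotes the matrix spectral norm. This identity follows from the chain $\|H_\epsilon\bJ\|_{L^2}^2=\int\|M(\bx)\bJ(\bx)\|^2\,dV\le\big(\operatorname{ess\,sup}_\bx\|M(\bx)\|_2^2\big)\|\bJ\|_{L^2}^2$ together with the matching lower bound obtained by concentrating $\bJ$ near points where $\|M(\bx)\|_2$ approaches its essential supremum. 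Thus it suffices to bound this essential supremum strictly below $1$.

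First I would fix a point $\bx$ and exploit that $\epsilon(\bx)$ is real and symmetric. Then $\epsilon(\bx)+I$ and $\epsilon(\bx)-I$ are simultaneously diagonalizable in a common orthonormal eigenbasis, so $M(\bx)$ is itself symmetric, with eigenvalues
\[
\mu_i(\bx)=\frac{\lambda_i(\bx)-1}{\lambda_i(\bx)+1},
\]
where $\lambda_1(\bx),\lambda_2(\bx),\lambda_3(\bx)>0$ are the eigenvalues of $\epsilon(\bx)$. Because the spectral norm of a symmetric matrix equals the largest absolute value of its eigenvalues, $\|M(\bx)\|_2=\max_i|\mu_i(\bx)|$. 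For any $\lambda>0$ one checks directly that $|\lambda-1|<\lambda+1$, so $|\mu_i(\bx)|<1$ at every point.

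The main obstacle is that this pointwise strict inequality does not by itself prevent the essential supremum from equalling $1$; a uniform gap is required. Here I would invoke the hypotheses that $\epsilon(\bx)$ has bounded entries and that its smallest eigenvalue is bounded away from zero, which together furnish constants $0<c\le C<\infty$ with $c\le\lambda_i(\bx)\le C$ for all $i$ and almost every $\bx$. The scalar function $g(\lambda)=|(\lambda-1)/(\lambda+1)|$ is continuous on the compact interval $[c,C]$ and strictly less than $1$ throughout, hence attains a maximum $\delta:=\max_{\lambda\in[c,C]}g(\lambda)<1$. Consequently $\|M(\bx)\|_2\le\delta$ for almost every $\bx$, and taking the essential supremum yields $\|H_\epsilon\|_2\le\delta<1$, as claimed. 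I note that outside the support of $\epsilon-I$ the matrix $M(\bx)$ vanishes identically, so only its behaviour on a compact set is relevant, which is exactly where the compactness argument applies.
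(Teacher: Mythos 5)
Your proof is correct and follows essentially the same route as the paper: diagonalize the real symmetric matrix $\epsilon(\bx)$ and observe that the map $\lambda \mapsto (\lambda-1)/(\lambda+1)$ sends its eigenvalues, which lie in a compact interval $[c,C]\subset(0,\infty)$, to values of modulus uniformly less than one. If anything, you are more explicit than the paper about the step it leaves implicit, namely that boundedness of the entries together with uniform ellipticity gives a uniform gap $\delta<1$ in the essential supremum rather than merely a pointwise strict inequality.
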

\begin{proof}
For $z \in \mathbb{C}$, let $f(z)=\frac{z-1}{z+1}$. $f$ maps
the open half space $\Re{z}>0$ to $|f(z)|<1$.
Since
$\epsilon(\bx)$ is assumed to be real symmetric and uniformly elliptic,
it is expressible in diagonal form as $\epsilon(\bx)=U(\bx)D(\bx)U^*(\bx)$, 
with the diagonal elements of $D(\bx)$ positive and bounded away from zero
\cite{evansPDE}. It follows that 
$\rho_{\epsilon}(\bx)=U(\bx)f(D)(\bx)U^*(\bx)\in L^{\infty}(\mathbb{R}^3)$,
with eigenvalues strictly bounded by one, proving the desired result.
\end{proof}

\begin{lemma}\label{lemma2}
The operator $2\To(\bJ)$ is an isometry on $L^2(\mathbb{R}^3)$.
That is, $\|2\To\|_{L^2(\mathbb{R}^3)}=1.$
\end{lemma}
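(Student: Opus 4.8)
The plan is to diagonalize the operator $2\To$ using the Fourier transform and to recognize its symbol as a pointwise family of reflections. First I would use the identity $\Delta\Vo(\bJ)=-\bJ$ recorded above: with the convention $\widehat{\bJ}(\boldsymbol\xi)=\int_{\mathbb{R}^3}\bJ(\bx)e^{-i\boldsymbol\xi\cdot\bx}\,dV_{\bx}$, this gives $\widehat{\Vo(\bJ)}(\boldsymbol\xi)=|\boldsymbol\xi|^{-2}\widehat{\bJ}(\boldsymbol\xi)$, since $\Vo$ is convolution with the Newtonian kernel. The operator $\nabla\nabla\cdot$ carries symbol $-\boldsymbol\xi\otimes\boldsymbol\xi$ (each $\partial_j$ contributing a factor $i\xi_j$), so that $\widehat{\nabla\nabla\cdot\Vo(\bJ)}(\boldsymbol\xi)=-|\boldsymbol\xi|^{-2}(\boldsymbol\xi\otimes\boldsymbol\xi)\widehat{\bJ}(\boldsymbol\xi)$. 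Recalling $2\To(\bJ)=\bJ+2\nabla\nabla\cdot\Vo(\bJ)$, the Fourier multiplier of $2\To$ is therefore the $3\times3$ matrix $M(\boldsymbol\xi)=I-2\,\dfrac{\boldsymbol\xi\otimes\boldsymbol\xi}{|\boldsymbol\xi|^2}$.

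The key observation is that $M(\boldsymbol\xi)$ is a Householder reflection. Writing $\hat{\boldsymbol\xi}=\boldsymbol\xi/|\boldsymbol\xi|$, we have $M(\boldsymbol\xi)=I-2\,\hat{\boldsymbol\xi}\otimes\hat{\boldsymbol\xi}$, the orthogonal reflection across the plane normal to $\boldsymbol\xi$. In particular $M(\boldsymbol\xi)$ is a real symmetric orthogonal matrix for every $\boldsymbol\xi\neq 0$, so that $|M(\boldsymbol\xi)\boldsymbol v|=|\boldsymbol v|$ for all $\boldsymbol v\in\mathbb{C}^3$ and $\|M(\boldsymbol\xi)\|_2=1$. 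Applying Plancherel's theorem componentwise then yields $\|2\To(\bJ)\|_{L^2}^2=(2\pi)^{-3}\int_{\mathbb{R}^3}|M(\boldsymbol\xi)\widehat{\bJ}(\boldsymbol\xi)|^2\,dV=(2\pi)^{-3}\int_{\mathbb{R}^3}|\widehat{\bJ}(\boldsymbol\xi)|^2\,dV=\|\bJ\|_{L^2}^2$, which is exactly the claimed isometry, and in particular gives $\|2\To\|_{L^2(\mathbb{R}^3)}=1$.

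The main point requiring care is the justification of the symbol computation, since the kernel of $\nabla\nabla\cdot\Vo$ is the Hessian of the Newtonian potential, a Calder\'on--Zygmund singular kernel, and the integral defining $\Vo(\bJ)$ need not converge absolutely for a general $\bJ\in L^2$. I would therefore first carry out the calculation for $\bJ$ in a dense subclass, say $\bJ\in C_c^\infty(\mathbb{R}^3;\mathbb{C}^3)$ or the Schwartz class, where differentiation under the integral and the Fourier identities above are classical and the multiplier representation $\widehat{2\To(\bJ)}=M\,\widehat{\bJ}$ holds exactly. Because $M(\boldsymbol\xi)$ is bounded with $\|M(\boldsymbol\xi)\|_2=1$, Plancherel shows that $2\To$ is bounded on this dense class with operator norm equal to $1$; the isometry identity then extends to all of $L^2(\mathbb{R}^3)$ by density and continuity, establishing the result.
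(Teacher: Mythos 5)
Your proof is correct, and it takes a genuinely different (though closely related) route from the paper's. The paper invokes the Helmholtz decomposition $\bJ=\nabla\psi+\nabla\times\mathbf{P}$ (citing a Hodge-decomposition theorem), checks that $2\To$ acts as $+1$ on the solenoidal part and $-1$ on the gradient part, and concludes by orthogonality of the two summands; you instead diagonalize $2\To$ on the Fourier side, identify its symbol as the Householder reflection $I-2\,\hat{\boldsymbol\xi}\otimes\hat{\boldsymbol\xi}$, and apply Plancherel. These are two realizations of the same structural fact --- the $\hat{\boldsymbol\xi}\otimes\hat{\boldsymbol\xi}$ projection is exactly the longitudinal (gradient) component of the Helmholtz decomposition in frequency space, and in both arguments the operator is a reflection about an orthogonal splitting of $L^2$ vector fields. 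What your version buys: it is self-contained (no appeal to the Helmholtz/Hodge decomposition as a black box), it makes the $L^2$-boundedness of the singular operator $\nabla\nabla\cdot\Vo$ transparent rather than implicit, and your final paragraph supplies the density/extension argument that the paper leaves unstated. What the paper's version buys: it is shorter, coordinate-free, and exhibits directly the eigenspaces ($2\To\nabla\times\mathbf{P}=\nabla\times\mathbf{P}$, $2\To\nabla\psi=-\nabla\psi$) that are reused conceptually elsewhere in the analysis. One cosmetic remark: strictly speaking, an operator with $M(\boldsymbol\xi)$ orthogonal a.e.\ is a surjective isometry (indeed an involution, since $M^2=I$), which is slightly stronger than the stated $\|2\To\|=1$; both your argument and the paper's deliver this stronger conclusion.
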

\begin{proof}
Using the Helmholtz decomposition (see, for example, Theorem 14 in
\cite{Hodge_L2}), 
we can write $\bJ=\nabla\psi+\nabla\times\mathbf{P}$.
It is straightforward to check that
$2\To(\nabla\times\mathbf{P})=\nabla\times\mathbf{P}$, while 
 $2\To(\nabla\phi)=-\nabla\phi$. Thus, 
$2\To(\bJ)=-\nabla\psi+\nabla\times\mathbf{P}$ and the result follows 
immediately from the orthogonality of the Helmholtz decomposition.
\end{proof}

\begin{thm}
There exist solutions to the anisotropic scalar and vector electrostatic
scattering problems.
\end{thm}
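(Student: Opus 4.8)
The plan is to prove existence for the \emph{vector} problem first, by recasting the PDE \eqref{Laplace_Vector} as a second-kind integral equation whose non-identity part is a strict contraction, and then to deduce the scalar case from Lemma \ref{Lemma_Laplace_div}. The point is that $\Vo$ and $\To$ already encode the mapping and jump relations we need, while Lemmas \ref{lemma1} and \ref{lemma2} are exactly what forces the relevant Neumann series to converge.

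First I would make the ansatz $\bF^{\Sc}=\Vo(\bJ)$ for an unknown density $\bJ\in L^2(\mathbb{R}^3)$ supported in a bounded set $\Omega$ containing $\mathrm{supp}(\epsilon-I)$. Using $\Delta\Vo(\bJ)=-\bJ$ together with $\nabla\nabla\cdot\Vo(\bJ)=\To(\bJ)-\tfrac{1}{2}\bJ$ from \eqref{Todef}, substitution into \eqref{Laplace_Vector} and collecting the $\bJ$ terms gives $-\tfrac{1}{2}(\epsilon+I)\bJ+(\epsilon-I)\To(\bJ)=-(\epsilon-I)\nabla\nabla\cdot\bF^{\In}$, since $I+\tfrac{1}{2}(\epsilon-I)=\tfrac{1}{2}(\epsilon+I)$. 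Multiplying through by $-2(\epsilon+I)^{-1}$, which is well defined because $\epsilon$ is positive definite, this becomes
\begin{equation}
\bigl(I-H_{\epsilon}\,2\To\bigr)\bJ=2(\epsilon+I)^{-1}(\epsilon-I)\nabla\nabla\cdot\bF^{\In},
\end{equation}
with $H_{\epsilon}$ as in \eqref{rhodef}. The right-hand side lies in $L^2(\Omega)$ because $\nabla\cdot\bF^{\In}$ is harmonic, hence smooth, on a neighborhood of $\Omega$, while $H_{\epsilon}$ is bounded and supported in $\Omega$.

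The crux is to show $B:=H_{\epsilon}\,2\To$ is a strict contraction on $L^2(\Omega)$ (extend by zero, apply $2\To$ on $\mathbb{R}^3$, multiply by $H_{\epsilon}$, and restrict). By Lemma \ref{lemma2}, $2\To$ is an isometry on $L^2(\mathbb{R}^3)$; by Lemma \ref{lemma1} together with the uniform ellipticity and boundedness of $\epsilon$ (so its eigenvalues lie in a fixed compact subinterval of $(0,\infty)$), the multiplication operator $H_{\epsilon}$ obeys $\|H_{\epsilon}\|_{L^2\to L^2}\le q<1$ for some $q$. Since $H_{\epsilon}$ vanishes off $\Omega$, composing yields $\|B\bJ\|_{L^2(\Omega)}\le q\,\|\bJ\|_{L^2(\Omega)}$, so $\|B\|<1$ and $I-B$ is invertible via $\sum_{k\ge0}B^{k}$. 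This produces a unique $\bJ\in L^2(\Omega)$, and $\bF^{\Sc}=\Vo(\bJ)$ is then a genuine solution: the mapping $\Vo:L^2(\Omega)\to H^2(\Omega)$ with interior regularity outside $\Omega$ gives $\bF^{\Sc}\in H^2_{loc}(\mathbb{R}^3)$; reversing the algebra verifies \eqref{Laplace_Vector}; and the Newtonian potential of a compactly supported $L^2$ density decays like $O(1/|\bx|)=o(1)$ uniformly in direction, so \eqref{Laplace_Vector_RC} holds.

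Finally, for the scalar problem I would, given a harmonic $\phi^{\In}$, build a harmonic vector incoming field with the prescribed divergence, for instance $\bF^{\In}=(f,0,0)$ with $f(\bx)=\int_{-R}^{x_1}\phi^{\In}(t,x_2,x_3)\,dt$ on a box containing $\mathrm{supp}(\epsilon-I)$; a short computation using $\partial_{2}^{2}\phi^{\In}+\partial_{3}^{2}\phi^{\In}=-\partial_{1}^{2}\phi^{\In}$ shows $f$ is harmonic, and $\nabla\cdot\bF^{\In}=\phi^{\In}$ by construction. Solving the vector problem for this data and invoking Lemma \ref{Lemma_Laplace_div} shows $\phi^{\Sc}:=\nabla\cdot\bF^{\Sc}$ solves the scalar problem. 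I expect the only real obstacle to be bookkeeping: factoring out $(\epsilon+I)$ so that the contraction appears precisely in the form $H_{\epsilon}\,2\To$ to which Lemmas \ref{lemma1}--\ref{lemma2} apply, and confirming that the pointwise bound of Lemma \ref{lemma1} upgrades, by uniform ellipticity, to an $L^2$-operator bound strictly below one.
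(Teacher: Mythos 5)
Your derivation of the integral equation and the contraction argument is exactly the paper's proof: the ansatz $\bF^{\Sc}=\Vo(\bJ)$, the identity $\nabla\nabla\cdot\Vo(\bJ)=-\tfrac12\bJ+\To(\bJ)$ from \eqref{Todef}, multiplication by $-2(\epsilon+I)^{-1}$ to reach $\bigl(I-2H_{\epsilon}\To\bigr)\bJ=2H_{\epsilon}\nabla\phi^{\In}$ (the paper's \eqref{eq:intLap} with $B=-2H_{\epsilon}\To$), invertibility by the Neumann series via Lemmas \ref{lemma1} and \ref{lemma2}, and the mapping property $\Vo:L^2(B_R)\to H^2(B_R)$ to conclude $\bF^{\Sc}\in H^2_{loc}(\mathbb{R}^3)$ and $\nabla\cdot\bF^{\Sc}\in H^1_{loc}(\mathbb{R}^3)$. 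Your observation that the pointwise bound of Lemma \ref{lemma1} must be upgraded to a uniform bound $q<1$ using uniform ellipticity and boundedness of $\epsilon$ is a correct and worthwhile refinement of what the paper states.

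The one step that fails is the construction of a harmonic $\bF^{\In}$ with prescribed divergence. For $f(\bx)=\int_{-R}^{x_1}\phi^{\In}(t,x_2,x_3)\,dt$ one has $\partial_1^2 f=\partial_1\phi^{\In}(x_1,x_2,x_3)$, while $(\partial_2^2+\partial_3^2)f=-\int_{-R}^{x_1}\partial_1^2\phi^{\In}\,dt=-\partial_1\phi^{\In}(x_1,x_2,x_3)+\partial_1\phi^{\In}(-R,x_2,x_3)$; hence $\Delta f=\partial_1\phi^{\In}(-R,x_2,x_3)$, which is not zero in general, so $\bF^{\In}=(f,0,0)$ does not satisfy the harmonicity required in the definition of the vector problem. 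This is repairable (add to $f$ a function $g(x_2,x_3)$ solving $(\partial_2^2+\partial_3^2)g=-\partial_1\phi^{\In}(-R,\cdot)$ on the cross-section; since $g$ is independent of $x_1$ the divergence is unchanged), but the paper sidesteps the issue entirely: it never constructs $\bF^{\In}$, instead posing the auxiliary equation \eqref{aug_anisotrop} and the integral equation with $\nabla\phi^{\In}$ directly on the right-hand side. Since your right-hand side $2H_{\epsilon}\nabla\nabla\cdot\bF^{\In}$ enters only through $\nabla\cdot\bF^{\In}=\phi^{\In}$, you can simply drop the construction and take the data to be $2H_{\epsilon}\nabla\phi^{\In}$, after which the rest of your argument, together with the weak-form computation in Lemma \ref{Lemma_Laplace_div}, goes through unchanged.
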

\begin{proof}
We note first that the vector field $\bF:=\Vo(\bJ)$ satisfies eq. 
\eqref{aug_anisotrop} if and only if $\bJ$ satisfies
\begin{equation}
-\bJ+(\epsilon-I)\nabla\nabla\cdot \Vo(\bJ)=-(\epsilon-I)\nabla\phi^{\In}.
\end{equation}
From \eqref{Todef}, this is equivalent to
\begin{equation}
-\bJ+(\epsilon-I)\Big(-\frac{\bJ}{2}+\To(\bJ)\Big)=-(\epsilon-I)\nabla\phi^{\In}.
\end{equation}
Multiplying by $-2(\epsilon+I)^{-1}$ and a little algebra yields
\begin{equation}\label{eq:intLap}
( I+ B) \bJ = 2H_\epsilon\nabla \phi^{\In},
\end{equation}
where $H_\epsilon$ is defined in \eqref{rhodef} and
$B(\bJ):=-2H_{\epsilon}\To(\bJ)$.
Eq. (\ref{eq:intLap}) is an integral equation 
for $\bJ$.
Moreover, from Lemmas \ref{lemma1} and \ref{lemma2},
the operator
$B: L^2(B_R) \rightarrow L^2(B_R)$
satisfies $\|B\|<1$, so that $(I+B)$ is invertible using the Neumann series:
$$\bJ=2(I+B)^{-1}H_{\epsilon}\nabla\phi^{\In}
=2\sum_{n=0}^{\infty}B^nH_{\epsilon}\nabla\phi^{\In}.$$

Since the volume integral operator $\Vo$ maps
$L^2(B_R)$ to $H^2(B_R)$, it follows that 
$\mathbf{F}:=\Vo(\bJ)\in H^2_{loc}(\mathbb{R}^3)$. Thus, 
$\mathbf{F}$ is a solution to the vector electrostatic scattering problem, 
and $\nabla\cdot\mathbf{F}\in H^1_{loc}(\mathbb{R}^3)$ is a solution to the 
anisotropic electrostatic scattering problem.
\end{proof}
The same result holds in the two-dimensional case.

\begin{remark}[Smoothness of the coefficients]\label{Num_comments}
From a practical viewpoint, the integral equation \eqref{eq:intLap} can be
discretized using a Nystr\"om method and solved iteratively to obtain 
a numerical solution of the original scalar problem (\ref{Laplace_Scalar}). 
It is 
worth noting that no estimate involving derivatives of $\epsilon_{ij}(\bx)$ is required.
Because it is a Fredholm equation of the second kind, 
the order of accuracy obtained in the solution is the 
same as the order of accuracy used in the underlying quadrature rule
\cite{ANSELONEBOOK}.
Of course, if there are jumps in $\epsilon(\bx)$, then adaptive discretization methods 
are recommended for resolution, but additional unknowns and surface integral 
operators are not required to account for the effect of these discontinuities.
\end{remark}

\section{The anisotropic Helmholtz equation}

In this section, we assume that the matrix $\epsilon(\bx)$ has entries in 
$C^1(\mathbb{R}^3)$, although we will discuss some of the issues
raised in relaxing this assumption. We will also assume that 
$\epsilon(\bx)$  can be diagonalized in the form
$$\epsilon(\bx)=U(\bx)D(\bx)U^*(\bx),$$ 
where $U(\bx)$ is a unitary complex matrix and
$D(\bx)$ is a diagonal matrix with positive definite real part (with entries 
bounded away from zero) and a positive semi-definite imaginary part 
(see \cite{Potthast1}). 
We will also assume that $\epsilon(\bx)-I$ 
has compact support, where $I$ is the $3\times 3$ identity matrix.
After proving uniqueness for the anisotropic Helmholtz equation, we
introduce a related vector Helmholtz equation that will be used to establish 
existence using Fredholm theory.

\begin{definition}\label{Definition_IASH}
By the {\em anisotropic Helmholtz scattering problem},
 we mean the determination of a function 
$\phi^{\Sc}(\bx)\in H^1_{loc}(\mathbb{R}^3)$ that satisfies the equation:
\begin{equation}\label{Helmholtz_Scalar_strong}
\begin{aligned}
\nabla\cdot\epsilon^{-1}(\bx)\nabla\phi^{\Sc}(\bx)+\omega^2\phi^{\Sc}(\bx)=-\nabla\cdot(\epsilon^{-1}(\bx)-I)\nabla\phi^{\In},\ \ \bx\in\mathbb{R}^3,
\end{aligned}
\end{equation}
where $\phi^{\In}$ is a known function with
$$\Delta \phi^{\In}(\bx)+\omega^2\phi^{\In}(\bx)=0$$ 
in the support of $\epsilon(\bx)-I$. 
$\phi^{\Sc}(\bx)$ must also satisfy the Sommerfeld radiation condition
\eqref{helmrad}.
\end{definition}

\begin{thm}[Uniqueness]\label{Unique_Helmholtz_scalar}
The anisotropic Helmholtz scattering problem has at most one solution.
\end{thm}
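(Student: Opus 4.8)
The plan is to adapt the classical Rellich plus unique continuation argument for the exterior Helmholtz problem to the anisotropic, complex-coefficient setting, with the spectral hypothesis on $\epsilon$ supplying the crucial sign that replaces the ellipticity estimate used in Theorem \ref{Unique_Laplace_Scalar}. Suppose $\phi^{\Sc}\in H^1_{loc}(\mathbb{R}^3)$ solves the homogeneous problem, i.e. $\nabla\cdot\epsilon^{-1}\nabla\phi^{\Sc}+\omega^2\phi^{\Sc}=0$ together with the radiation condition \eqref{helmrad}. Let $B_R$ be an open ball containing the support of $\epsilon-I$. Outside $B_R$ the operator reduces to the constant-coefficient Helmholtz operator, so, exactly as in Lemma \ref{Lemma_Laplace_div}, interior elliptic regularity gives $\phi^{\Sc}\in C^\infty(\mathbb{R}^3\setminus B_R)$ and the boundary integrals below may be taken in the strong sense.

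First I would multiply the equation by $\overline{\phi^{\Sc}}$, integrate over $B_R$, and integrate by parts, using $\epsilon^{-1}=I$ on $\partial B_R$, to obtain
\begin{equation}
-\int_{B_R}\nabla\overline{\phi^{\Sc}}\cdot\epsilon^{-1}\nabla\phi^{\Sc}\,dV
+\omega^2\int_{B_R}|\phi^{\Sc}|^2\,dV
=-\int_{\partial B_R}\overline{\phi^{\Sc}}\,\frac{\partial\phi^{\Sc}}{\partial\bn}\,dS.
\end{equation}
Taking imaginary parts kills the real quantities $\omega^2$ and $\int|\phi^{\Sc}|^2$. Diagonalizing $\epsilon=UDU^*$ and setting $w=U^*\nabla\phi^{\Sc}$, the volume integrand becomes $\sum_k|w_k|^2 d_k^{-1}$; since each eigenvalue $d_k$ has nonnegative imaginary part, $\operatorname{Im}(d_k^{-1})\le0$, so $\operatorname{Im}\int_{B_R}\nabla\overline{\phi^{\Sc}}\cdot\epsilon^{-1}\nabla\phi^{\Sc}\,dV\le0$, whence
\begin{equation}
\operatorname{Im}\int_{\partial B_R}\overline{\phi^{\Sc}}\,\frac{\partial\phi^{\Sc}}{\partial\bn}\,dS\le0.
\end{equation}

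Next I would exploit that $\phi^{\Sc}$ satisfies the free Helmholtz equation for $|\bx|>R$ and that $\omega$ is real: Green's second identity on spherical shells shows this boundary integral is independent of $R$, say equal to $\gamma\le0$. Expanding the nonnegative quantity $\int_{\partial B_R}|\partial\phi^{\Sc}/\partial\bn-i\omega\phi^{\Sc}|^2\,dS$ and invoking \eqref{helmrad} gives
\begin{equation}
\int_{\partial B_R}\Big(\big|\tfrac{\partial\phi^{\Sc}}{\partial\bn}\big|^2+\omega^2|\phi^{\Sc}|^2\Big)\,dS
=2\omega\gamma+o(1),\qquad R\to\infty.
\end{equation}
The left side is nonnegative while $2\omega\gamma\le0$, forcing $\gamma=0$ and hence $\lim_{R\to\infty}\int_{\partial B_R}|\phi^{\Sc}|^2\,dS=0$. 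Rellich's lemma (see \cite{CK1}) then yields $\phi^{\Sc}\equiv0$ in $\mathbb{R}^3\setminus\overline{B_R}$.

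Finally I would propagate this vanishing inward: since $\phi^{\Sc}$ solves a second-order, uniformly elliptic equation with $C^1$ coefficients and vanishes on the open set $\mathbb{R}^3\setminus\overline{B_R}$, the unique continuation principle gives $\phi^{\Sc}\equiv0$ on all of $\mathbb{R}^3$. I expect this last step to be the main obstacle, since unique continuation is delicate for variable-coefficient operators and one must appeal to a version valid under only $C^1$ (or Lipschitz) regularity of $\epsilon$ rather than the analytic-coefficient case; this is precisely where the $C^1$ hypothesis on $\epsilon$ is used. The other essential ingredient, and the one that distinguishes this from the Laplace argument, is the sign step: unlike the positive-definite electrostatic form, here it is the nonnegativity of the imaginary parts of the eigenvalues of $\epsilon$ that produces the inequality making Rellich's lemma applicable.
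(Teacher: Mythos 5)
Your proposal is correct and follows essentially the same route as the paper's proof: the energy identity with test function $\overline{\phi^{\Sc}}$, taking imaginary parts and using the spectral assumption $\epsilon=UDU^*$ to get the sign of the boundary flux, then Rellich's lemma to kill $\phi^{\Sc}$ outside $B_R$ and unique continuation to propagate the vanishing inward. The only differences are cosmetic: you spell out the Rellich step (constancy of the flux in $R$ plus expanding $\int|\partial_n\phi^{\Sc}-i\omega\phi^{\Sc}|^2$) that the paper delegates to the cited lemma, while the paper additionally records the interior $H^2$ regularity upgrade needed before invoking the unique continuation theorem in strong form.
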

\begin{proof}
The result follows from arguments analogous to those presented in 
section 2 of \cite{ucp_disc_1}.
Let $B_R$ be an open ball centered at the origin that covers the 
support of $\epsilon(\bx)-I$. Assuming the right-hand side is zero,
we can write \eqref{Helmholtz_Scalar_strong} in weak form by making
use of the Dirichlet to Neumann operator $T$ for the exterior of 
the sphere $S_R=\partial B_R$:
\begin{equation}
\label{Helmholtz_Scalar_weak}
\int_{\partial B_R}\psi T(\phi^{\Sc})dS =
\int_{B_R}\nabla \psi(\bx) \cdot \epsilon(\bx)^{-1}\nabla \phi^{\Sc}(\bx) 
-\psi(\bx)\omega^2 \phi^{\Sc}(\bx)dV,
\end{equation}
for all $\psi\in H^1(B_R)$.
Letting $\psi=\overline{\phi^{\Sc}}$ and taking complex conjugates, we have
\begin{equation}
\int_{\partial B_R}\phi^{\Sc}\frac{\partial \overline{\phi^{\Sc}}}{\partial n}dS
=\int_{B_R}\nabla \phi^{\Sc} \cdot 
(\bx)\overline{\epsilon(\bx)^{-1}}\overline{\nabla \phi^{\Sc}(\bx)} 
-\omega^2 |\phi^{\Sc}(\bx)|^2dV,
\end{equation}
so that
\begin{equation}
\Im \bigg(\int_{\partial B_R}\phi^{\Sc}\frac{\partial \overline{\phi^{\Sc}}}{\partial n}dS\bigg)=
\Im \bigg(\int_{B_R}\nabla \phi^{\Sc} \cdot (\bx)\overline{\epsilon(\bx)^{-1}}\overline{\nabla \phi^{\Sc}(\bx)} \bigg).
\end{equation}
Moreover from our assumptions about $\epsilon$, 
namely that $\epsilon(\bx)=U(\bx)D(\bx)U^*(\bx)$, 
the right-hand side can be written as
$$\nabla \phi^{\Sc} \cdot (\bx)\overline{\epsilon(\bx)^{-1}}\overline{\nabla \phi^{\Sc}(\bx)}=\xi(\bx) \cdot \overline{D^{-1}(\bx)\xi(\bx)}=\sum_{i=1}^3|\xi_i(\bx)|^2\overline{D^{-1}_{ii}(\bx)},$$
where $\xi(\bx) = \nabla U^*(\bx) \phi^{\Sc}$.
Thus,
\begin{equation}
\Im \bigg(\int_{\partial B_R}\phi^{\Sc}\frac{\partial \overline{\phi^{\Sc}}}{\partial n}dS\bigg)\ge 0.
\end{equation}
From Rellich's lemma \cite{CK1}, we may conclude that 
$\phi^{\Sc}(\bx)=0, \forall \bx\in\mathbb{R}^3/ B_R$. It then follows
from Theorem 1 in Section 6.3.1 of  \cite{evansPDE}  that 
$\phi^{\Sc}\in H^2(B_R)$. As a result, eq. \eqref{Helmholtz_Scalar_strong} is 
satisfied in a strong sense and we can use the unique continuation theorem 
(Theorem 17.2.6, \cite{hormander_III}) to conclude 
that $\phi^{\Sc}(\bx)=0$ for all $\bx\in\mathbb{R}^3$.
\end{proof}

As in the electrostatic case, the essential idea underlying the derivation of 
a well-conditioned formulation involves recasting the scalar problem 
of interest in terms of a vector-valued PDE.

\begin{definition} By the {\em vector Helmholtz scattering problem} we 
mean the determination of a vector function 
$\mathbf{F}^{\Sc}(\bx)\in H^2_{loc}(\mathbb{R}^3)$ satisfying the equation
\begin{equation}\label{Helmholtz_Vector}
\begin{aligned}
\Delta \mathbf{F}^{\Sc}+\omega^2\mathbf{F}^{\Sc}+(\epsilon^{-1}-I)\nabla\nabla\cdot\mathbf{F}^{\Sc}=-(\epsilon^{-1}-I)\nabla\nabla\cdot\mathbf{F}^{\In}, \\
\end{aligned}
\end{equation}
where $\mathbf{F}^{\In}(\bx)$ is a function defined on the support of 
$\epsilon(\bx)-I$ satisfying the homogeneous equation 
$\Delta\bF^{\In}+\omega^2\bF^{\In}=0$ and the standard radiation condition 
\begin{equation}\label{radAHm}
\begin{array}{ll}
\nabla\times \mathbf{F}^{\Sc}(\bx)\times\frac{\bx}{|\bx|}+\frac{\bx}{|\bx|}\nabla\cdot \mathbf{F}^{\Sc}(\bx)-ik\mathbf{F}^{\Sc}(\bx)=o\Big(\frac{1}{|\bx|}\Big), \quad &|\bx|\rightarrow \infty\, ,
\end{array}
\end{equation}
uniformly in all directions $\frac{\bx}{|\bx|}$.
\end{definition}

Note that in the vector Helmholtz scattering problem,
the entries of $\epsilon^{-1}$ are not acted on by a differential operator.
Thus, we will consider solutions of 
(\ref{Helmholtz_Vector}) in a strong sense, without loss of generality.

\begin{lemma}\label{Lemma1_div}
If $\mathbf{F}^{\Sc}(\bx)\in H^2_{loc}(\mathbb{R}^3)$ satisfies the 
vector Helmholtz scattering problem in a strong sense, 
then $\phi^{\Sc}(\bx):=\nabla\cdot\mathbf{F}^{\Sc}(\bx)\in 
H^1_{loc}(\mathbb{R}^3)$ satisfies the anisotropic Helmholtz scattering problem 
in a weak sense, with right-hand side given by the incoming field 
$\phi^{\In}=\nabla\cdot \mathbf{F^{\In}}$.
\end{lemma}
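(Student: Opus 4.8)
The plan is to follow the same two-part strategy used in the proof of Lemma~\ref{Lemma_Laplace_div} for the electrostatic case, modifying each step to account for the zeroth-order term $\omega^2\bF^{\Sc}$. First I would establish that $\phi^{\Sc}:=\nabla\cdot\bF^{\Sc}$ inherits the correct behavior at infinity, namely the Sommerfeld condition~\eqref{helmrad}; then I would derive the scalar weak formulation~\eqref{Helmholtz_Scalar_weak} by testing the vector equation~\eqref{Helmholtz_Vector} against $\nabla\psi$ and integrating over a ball $B_R$ containing the support of $\epsilon(\bx)-I$.

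For the radiation condition, I would observe that outside $B_R$ the coefficient $\epsilon^{-1}-I$ vanishes, so each Cartesian component of $\bF^{\Sc}$ satisfies the scalar Helmholtz equation $\Delta \bF^{\Sc}_i+\omega^2\bF^{\Sc}_i=0$. By interior elliptic regularity (as in the regularity results of~\cite{elliptic_pde}) the components are $C^\infty$ in $\bbR^3\backslash B_R$, so the radiation condition~\eqref{radAHm} may be read in the strong sense. Applying the representation theorems of~\cite{CK1} to each component shows that $\bF^{\Sc}$ is an outgoing field with a far-field expansion $\bF^{\Sc}\sim \frac{e^{i\omega r}}{r}\,\ba(\hat{\bx})+O(r^{-2})$. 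Differentiating term by term, the leading contribution to $\nabla\cdot\bF^{\Sc}$ is $i\omega\,\frac{e^{i\omega r}}{r}\,(\hat{\bx}\cdot\ba)$, which is an outgoing scalar Helmholtz wave; hence $\phi^{\Sc}$ satisfies~\eqref{helmrad}. In particular $\phi^{\Sc}$ is a radiating solution of $\Delta\phi^{\Sc}+\omega^2\phi^{\Sc}=0$ exterior to $B_R$, so its normal derivative on $\partial B_R$ equals $T[\phi^{\Sc}]$, the DtN data.

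To obtain the weak formulation I would dot~\eqref{Helmholtz_Vector} with $\nabla\psi$ for $\psi\in H^1(B_R)$, substitute the identity $\Delta\bF^{\Sc}=-\nabla\times\nabla\times\bF^{\Sc}+\nabla\nabla\cdot\bF^{\Sc}$, and collect the terms $\nabla\psi\cdot\nabla\nabla\cdot\bF^{\Sc}+\nabla\psi\cdot(\epsilon^{-1}-I)\nabla\nabla\cdot\bF^{\Sc}=\nabla\psi\cdot\epsilon^{-1}\nabla\phi^{\Sc}$. Integrating over $B_R$ and applying identity~\eqref{IDN} converts the double-curl term into the boundary integral $\int_{\partial B_R}\psi\,\bn\cdot\nabla\times\nabla\times\bF^{\Sc}\,dS$. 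Since $\Delta\bF^{\Sc}=-\omega^2\bF^{\Sc}$ on $\partial B_R$, the same vector identity gives $\nabla\times\nabla\times\bF^{\Sc}=\nabla\nabla\cdot\bF^{\Sc}+\omega^2\bF^{\Sc}$ there, so the first piece reproduces $\int_{\partial B_R}\psi\,T[\phi^{\Sc}]\,dS$.

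The step I expect to require the most care is the bookkeeping of the $\omega^2$ terms, which is absent in the electrostatic argument. The boundary relation above leaves a stray term $\omega^2\int_{\partial B_R}\psi\,\bn\cdot\bF^{\Sc}\,dS$, while the $\omega^2\bF^{\Sc}$ term in~\eqref{Helmholtz_Vector} contributes a volume term $\omega^2\int_{B_R}\nabla\psi\cdot\bF^{\Sc}\,dV$. The key observation is that these combine via the divergence theorem, $\int_{B_R}\nabla\psi\cdot\bF^{\Sc}\,dV=\int_{\partial B_R}\psi\,\bn\cdot\bF^{\Sc}\,dS-\int_{B_R}\psi\,\nabla\cdot\bF^{\Sc}\,dV$, so that the boundary contributions cancel and exactly the zeroth-order volume term $-\omega^2\int_{B_R}\psi\,\phi^{\Sc}\,dV$ of~\eqref{Helmholtz_Scalar_weak} survives. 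Verifying this cancellation, together with the far-field implication that $\nabla\cdot\bF^{\Sc}$ radiating yields~\eqref{helmrad}, is the crux; the remaining manipulations mirror the Laplace case verbatim.
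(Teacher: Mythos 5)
Your proposal is correct and follows essentially the same route as the paper's proof: establish smoothness and the Sommerfeld condition for $\nabla\cdot\bF^{\Sc}$ outside $B_R$ via interior regularity and the representation theorems, then test \eqref{Helmholtz_Vector} against $\nabla\psi$, apply \eqref{IDN}, and use $\Delta\bF^{\Sc}+\omega^2\bF^{\Sc}=0$ on $\partial B_R$ to reduce the boundary term to $\int_{\partial B_R}\psi\,T[\nabla\cdot\bF^{\Sc}]\,dS$. The cancellation of the $\omega^2\int_{\partial B_R}\psi\,\bn\cdot\bF^{\Sc}\,dS$ terms that you flag as the crux is exactly the step the paper performs by combining $\nabla\times\nabla\times\bF^{\Sc}-\omega^2\bF^{\Sc}$ into $\nabla\nabla\cdot\bF^{\Sc}$ on the boundary, so the two arguments are the same computation organized slightly differently.
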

\begin{proof}
Letting $B_R$ be an open a ball centered at the origin that covers the support 
of $\epsilon(\bx)-I$, it is clear that the governing equation in  
the region $E = \mathbb{R}^3 \backslash B_R$ is simply the isotropic, 
homogeneous Helmholtz equation. Thus,
by standard results on the regularity of coefficients 
(Corollary 8.11, \cite{elliptic_pde}), the solution $\mathbf{F}_i^{\Sc}$ is 
infinitely differentiable in $E$.
We may, therefore, interpret the radiation condition in 
the strong sense. From the representation theorems 4.11 and 
4.13 in \cite{CK1} applied to the region $E$,
we find that $\phi^{\Sc}(\bx):=\nabla\cdot \bF^{\Sc}(\bx)$ satisfies the 
radiation condition (\ref{helmrad}).

Now let $\psi\in H^{1}(B_R)$. From eq. (\ref{Helmholtz_Vector}), we have
\begin{equation}
\begin{aligned}
\nabla\psi(\bx)\cdot\Big(\Delta \mathbf{F}^{\Sc}+\omega^2\mathbf{F}^{\Sc}+(\epsilon^{-1}-I)\nabla\nabla\cdot\mathbf{F}^{\Sc}\Big)=-\nabla\psi(\bx)\cdot(\epsilon^{-1}-I)\nabla\nabla\cdot\mathbf{F}^{\In}.
\end{aligned}
\end{equation}
Combined with the vector identity $\Delta \mathbf{F}^{\Sc}
=-\nabla\times\nabla\times\mathbf{F}^{\Sc}+\nabla\nabla\cdot\mathbf{F}^{\Sc}$, 
this yields
\begin{equation}
\nabla\psi\cdot\Big(-\nabla\times\nabla\times\mathbf{F}^{\Sc}+\omega^2\mathbf{F}^{\Sc}+\epsilon^{-1}\nabla\nabla\cdot\mathbf{F}^{\Sc}\Big)=-\nabla\psi(\bx)\cdot(\epsilon^{-1}-I)\nabla\nabla\cdot\mathbf{F}^{\In}.
\end{equation}
Integrating over the volume $B_R$ and using the divergence theorem, we obtain
\begin{equation}
\begin{aligned}
\int_{B_R}\nabla\psi(\bx)\cdot\epsilon^{-1}\nabla\nabla\cdot\mathbf{F}^{\Sc}-\omega^2\psi\nabla\cdot\mathbf{F}^{\Sc}dV&=\int_{\partial B_R}\psi\bn\cdot\Big(\nabla\times\nabla\times\mathbf{F}^{\Sc}-\omega^2\mathbf{F}^{\Sc}\Big)dV\\
-&\int_{B_R}\nabla\psi(\bx)\cdot(\epsilon^{-1}-I)\nabla\nabla\cdot\mathbf{F}^{\In}dV.
\end{aligned}
\end{equation}
This can be rewritten in the form
\begin{equation}
\begin{aligned}
&\int_{B_R}\nabla\psi(\bx)\cdot\epsilon^{-1}\nabla\nabla\cdot\mathbf{F}^{\Sc}-\omega^2\psi\nabla\cdot\mathbf{F}^{\Sc}dV\\
&=\int_{\partial B_R}\psi\bn\cdot\Big(-\Delta\mathbf{F}^{\Sc}-\omega^2\mathbf{F}^{\Sc}+\nabla\nabla\cdot\mathbf{F}^{\Sc}\Big)dV
-\int_{B_R}\nabla\psi(\bx)\cdot(\epsilon^{-1}-I)\nabla\nabla\cdot\mathbf{F}^{\In}dV.
\end{aligned}
\end{equation}
Since $\Delta\mathbf{F}^{\Sc}(\bx)+\omega^2\mathbf{F}^{\Sc}=0$ for $\bx\in \partial B_R$, we have the simpler equation:
\begin{equation}
\begin{aligned}
&\int_{B_R}\nabla\psi(\bx)\cdot\epsilon^{-1}\nabla\nabla\cdot\mathbf{F}^{\Sc}-\omega^2\psi\nabla\cdot\mathbf{F}^{\Sc}dV\\
&=\int_{\partial B_R}\psi T[\nabla\cdot \mathbf{F}^{\Sc}]dS
-\int_{B_R}\nabla\psi(\bx)\cdot(\epsilon^{-1}-I)\nabla\nabla\cdot\mathbf{F}^{\In}dV.
\end{aligned}
\end{equation}
It follows that $\phi^{\Sc}:=\nabla\cdot\mathbf{F}^{\Sc}$ satisfies  
(\ref{Helmholtz_Scalar_weak}) for $\phi^{\In}=\nabla\cdot \mathbf{F}^{\In}$,
the desired result.
\end{proof}

\begin{thm}[Uniqueness] The Vector Helmholtz scattering problem has at most 
one solution. \label{AVHP_unique}
\end{thm}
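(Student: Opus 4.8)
The plan is to mirror the uniqueness argument used for the vector electrostatic problem, reducing to the scalar uniqueness result (Theorem~\ref{Unique_Helmholtz_scalar}) and then exploiting the divergence-free structure that emerges. First I would take $\bF^{\Sc}\in H^2_{loc}(\mathbb{R}^3)$ solving the homogeneous equation $\Delta\bF^{\Sc}+\omega^2\bF^{\Sc}+(\epsilon^{-1}-I)\nabla\nabla\cdot\bF^{\Sc}=0$ together with the radiation condition \eqref{radAHm}. By Lemma~\ref{Lemma1_div}, $\phi^{\Sc}:=\nabla\cdot\bF^{\Sc}$ satisfies the homogeneous weak formulation of the anisotropic Helmholtz scattering problem, so Theorem~\ref{Unique_Helmholtz_scalar} forces $\nabla\cdot\bF^{\Sc}=0$ on all of $\mathbb{R}^3$. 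Substituting this back into the homogeneous vector equation annihilates the term $(\epsilon^{-1}-I)\nabla\nabla\cdot\bF^{\Sc}$, leaving the constant-coefficient equation $\Delta\bF^{\Sc}+\omega^2\bF^{\Sc}=0$ valid on all of $\mathbb{R}^3$. Elliptic regularity then upgrades $\bF^{\Sc}$ to a smooth entire solution, so that each Cartesian component is an entire solution of the scalar Helmholtz equation.

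It remains to show that this entire, divergence-free field vanishes, and here the radiation condition does the work. Since $\nabla\cdot\bF^{\Sc}=0$, condition \eqref{radAHm} collapses to $\nabla\times\bF^{\Sc}\times\frac{\bx}{|\bx|}-i\omega\bF^{\Sc}=o(1/|\bx|)$. Moreover, using $\Delta\bF^{\Sc}=\nabla\nabla\cdot\bF^{\Sc}-\nabla\times\nabla\times\bF^{\Sc}$ with $\nabla\cdot\bF^{\Sc}=0$, the equation becomes $\nabla\times\nabla\times\bF^{\Sc}=\omega^2\bF^{\Sc}$. I would then define the auxiliary field $\bH:=\frac{1}{i\omega}\nabla\times\bF^{\Sc}$ and set $\bE:=\bF^{\Sc}$; a direct computation gives $\nabla\times\bE=i\omega\bH$ and $\nabla\times\bH=-i\omega\bE$, so that $(\bE,\bH)$ is a source-free solution of the isotropic, homogeneous Maxwell equations on all of $\mathbb{R}^3$. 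Rewriting the reduced radiation condition in terms of $\bH$ yields exactly the Silver--M\"uller condition \eqref{silverrad}, namely $\bH\times\bx-|\bx|\bE=o(1)$. Thus $(\bE,\bH)$ is an entire radiating electromagnetic field, which must vanish by the standard Maxwell uniqueness theorem (an entire radiating solution of the free-space Maxwell equations is identically zero; see \cite{CK1}). Hence $\bF^{\Sc}=\bE=0$, establishing uniqueness.

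The main obstacle is the handling of the radiation condition in this last stage. The vector condition \eqref{radAHm} is not, a priori, componentwise Sommerfeld, so one cannot directly apply scalar radiation uniqueness to each $F^{\Sc}_i$; it is precisely the divergence-free structure produced by the scalar uniqueness step that permits the reduction to a Maxwell field and the use of the Silver--M\"uller uniqueness theorem. A secondary technical point is ensuring that $\nabla\cdot\bF^{\Sc}=0$ holds globally (not merely inside $B_R$) and that the curl manipulations are legitimate, both of which follow once elliptic regularity makes $\bF^{\Sc}$ a smooth entire solution. One could alternatively bypass the electromagnetic reduction by showing directly that each component satisfies \eqref{helmrad} and invoking Rellich's lemma, but the Maxwell route is cleaner and reuses the framework already assembled in the paper.
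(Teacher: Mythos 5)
Your proposal is correct and follows essentially the same route as the paper: apply Lemma~\ref{Lemma1_div} to see that $\nabla\cdot\bF^{\Sc}$ solves the homogeneous scalar problem, invoke Theorem~\ref{Unique_Helmholtz_scalar} to get $\nabla\cdot\bF^{\Sc}=0$, and conclude that $\bF^{\Sc}$ is an entire radiating solution of $\Delta\bF^{\Sc}+\omega^2\bF^{\Sc}=0$, hence zero. The only difference is that you spell out the final step --- reducing \eqref{radAHm} with vanishing divergence to a free-space Maxwell field satisfying the Silver--M\"uller condition \eqref{silverrad} --- which the paper asserts without elaboration; your justification of that step is valid and a useful addition.
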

\begin{proof}
Let $\bF^{\Sc}\in H^2_{loc}(\mathbb{R}^3)$ be a solution of the homogeneous 
equation 
\begin{equation}\label{aug_helmholtz_homog}
\Delta \bF^{\Sc}+\omega^2\bF^{\Sc}+(\epsilon^{-1}-I)\nabla\nabla\cdot\bF^{\Sc}=0
\end{equation}
that satisfies the radiation condition.
From Lemma  \ref{Lemma1_div}, $\nabla\cdot\mathbf{F}^{\Sc}$ satisfies the 
homogeneous equation (\ref{Helmholtz_Scalar_weak}). 
Theorem \ref{Unique_Helmholtz_scalar} then shows that
$\nabla\cdot\mathbf{F}^{\Sc}=0$. Therefore,
we have that $\Delta\mathbf{F}^{\Sc}(\bx)+\omega^2\mathbf{F}^{\Sc}(\bx)=0$
for all $\bx \in \mathbb{R}^3$, so that $\mathbf{F}^{\Sc}=0$ for all 
$\bx \in \mathbb{R}^3$.
\end{proof}

In order to make use of the Fredholm alternative to complete our proof of
existence, we introduce the following operators:
\begin{equation}\label{Vw}
\begin{aligned}
\Vw(\bJ) &:=\int_{\mathbb R^3} \frac{e^{i\omega|\bx-\by|}}{4\pi|\bx-\by|}\bJ(\by)dV_{\by}, \\
\Tw(\bJ) &:=\frac{\bJ}{2}+\nabla\nabla\cdot \Vw(\bJ).
\end{aligned}
\end{equation}

We state the following lemma without proof 
(see \cite{CK2}, \cite{Potthast1}).

\begin{lemma}\label{lemma3}
The operator $\Tw-\To$ is compact on $L^2(\mathbb{R}^3)$.
\end{lemma}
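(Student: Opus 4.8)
The plan is to show that the difference $\Tw-\To$ removes the strongly singular part shared by the two operators and, in effect, gains two derivatives, so that it factors through a compact Sobolev embedding. Since \eqref{Todef} and \eqref{Vw} give $\To(\bJ)=\tfrac{\bJ}{2}+\nabla\nabla\cdot\Vo(\bJ)$ and $\Tw(\bJ)=\tfrac{\bJ}{2}+\nabla\nabla\cdot\Vw(\bJ)$, the identity terms cancel and
\[
\Tw-\To=\nabla\nabla\cdot(\Vw-\Vo).
\]
Because the currents $\bJ$ in the existence argument are supported in the fixed ball $B_R$ containing $\operatorname{supp}(\epsilon-I)$, I would establish compactness on $L^2(B_R)$, which is exactly what the Fredholm alternative for the vector Helmholtz problem requires.

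The key is a bootstrap in regularity. Writing $w:=\Vw(\bJ)$ and $v:=\Vo(\bJ)$, the mapping property of $\Vo$ recorded after \eqref{Vo}, together with its Helmholtz analogue for $\Vw$ (Theorem 8.2 of \cite{CK2}), gives $w,v\in H^2_{loc}(\bbR^3)$ for $\bJ\in L^2(B_R)$. From $\Delta\Vo(\bJ)=-\bJ$ and the defining property $(\Delta+\omega^2)\Vw(\bJ)=-\bJ$ one obtains, in the distributional sense,
\[
\Delta(w-v)=\bigl(-\bJ-\omega^2 w\bigr)-\bigl(-\bJ\bigr)=-\omega^2\,\Vw(\bJ).
\]
Thus the right-hand side already lies in $H^2_{loc}$, and interior elliptic regularity for the Laplacian upgrades $w-v$ to $H^4_{loc}$, with the relevant norm controlled by $\|\bJ\|_{L^2(B_R)}$. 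Applying the second-order operator $\nabla\nabla\cdot$ then shows that $\Tw-\To$ maps $L^2(B_R)$ continuously into $H^2(B_R)$. Since $B_R$ is bounded and Lipschitz, the Rellich--Kondrachov theorem makes the embedding $H^2(B_R)\hookrightarrow L^2(B_R)$ compact, so $\Tw-\To$ is compact on $L^2(B_R)$. Equivalently, one may record the resolvent identity $\Vw-\Vo=\omega^2\,\Vo\,\Vw$ and write $\Tw-\To=\omega^2\bigl(\To-\tfrac{I}{2}\bigr)\Vw$; here $\To-\tfrac{I}{2}=\nabla\nabla\cdot\Vo$ is bounded on $L^2$ (its norm is $\tfrac12$ by Lemma \ref{lemma2}), while $\Vw$ is compact on $L^2(B_R)$ by the same $H^2$-smoothing-plus-Rellich argument, so the composition is compact.

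The main obstacle is making the two-derivative gain rigorous. Concretely, one must justify that the Calder\'on--Zygmund singularity of $\nabla\nabla\cdot\Vw$ and $\nabla\nabla\cdot\Vo$ --- each an order $|\bx-\by|^{-3}$ principal-value kernel plus a $\delta$-type contribution --- cancels in the difference, leaving only a weakly singular (at worst $|\bx-\by|^{-1}$) kernel; this is precisely the content of the elliptic-regularity step, since both free-space Green's functions share the Newtonian singularity $\tfrac{1}{4\pi|\bx-\by|}$ and differ by the bounded function $\tfrac{e^{i\omega|\bx-\by|}-1}{4\pi|\bx-\by|}$. The delicate points are to track interior versus global regularity (interior estimates give $H^4$ only on compact subsets, so one enlarges $B_R$ slightly or uses that $w-v$ is smooth away from $\operatorname{supp}\bJ$) and to confirm the $H^2$-mapping property of $\Vw$ on a bounded domain. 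A fully self-contained alternative is to differentiate the kernel $\tfrac{e^{i\omega r}-1}{4\pi r}$ twice and verify directly that the resulting kernel is weakly singular, after which the standard compactness theorem for weakly singular integral operators on bounded domains applies.
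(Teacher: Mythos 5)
The paper does not actually prove this lemma---it is stated without proof, with a pointer to \cite{CK2} and \cite{Potthast1}---so your argument fills a genuine gap rather than competing with one, and it is essentially the standard argument found in those references. The core of your proof is correct: the identity parts cancel, $\Tw-\To=\nabla\nabla\cdot(\Vw-\Vo)$, the difference kernel $\frac{e^{i\omega r}-1}{4\pi r}$ is continuous with second derivatives of order $O(r^{-1})$ (the constant leading term $\frac{i\omega}{4\pi}$ is annihilated and the $O(r)$ term produces the $1/r$ singularity, so no principal-value or delta contributions survive in the difference), and an $O(r^{-1})$ kernel on a bounded domain in $\bbR^3$ is in fact Hilbert--Schmidt, since $\int_{B_R}\int_{B_R}|\bx-\by|^{-2}\,dV_{\bx}\,dV_{\by}<\infty$, which gives compactness on $L^2(B_R)$ in one line. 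Your elliptic-bootstrap version ($\Delta(w-v)=-\omega^2\Vw(\bJ)\in H^2_{loc}$, hence $w-v\in H^4_{loc}$, hence $\Tw-\To:L^2(B_R)\to H^2$ on a slightly smaller or enlarged ball, then Rellich--Kondrachov) is an equivalent and equally valid packaging. Your restriction to $L^2(B_R)$ is not merely permissible but necessary: on all of $L^2(\bbR^3)$ the operator $\Tw-\To$ is a Fourier multiplier with non-vanishing symbol $-\omega^2\xi_i\xi_j/\bigl(|\xi|^2(|\xi|^2-\omega^2)\bigr)$ and hence is not compact on the full space, so the lemma as literally stated is imprecise; compactness on $L^2(B_R)$ is exactly what the existence proof of Theorem \ref{existence_Helmholtz} uses, and you correctly identify this. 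Two small blemishes, neither fatal: (i) the resolvent identity $\Vw-\Vo=\omega^2\Vo\Vw$ that you offer as an alternative is delicate for real $\omega$, since $\Vw(\bJ)$ decays only like $1/r$ and its Newtonian potential is not absolutely convergent, so that route requires limiting absorption or a spatial cutoff---your direct kernel argument avoids this entirely; (ii) the norm of $\To-\tfrac{I}{2}=\nabla\nabla\cdot\Vo$ is $1$, not $\tfrac12$ (by the Helmholtz-decomposition computation in Lemma \ref{lemma2} it acts as $-1$ on gradients and $0$ on curls), though only its boundedness is needed at that point.
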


\begin{thm}[Existence]\label{existence_Helmholtz}
The anisotropic scalar and vector Helmholtz scattering problems have solutions.
\end{thm}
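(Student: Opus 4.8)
The plan is to follow exactly the route used for the electrostatic existence theorem, with one essential modification: the operator $\Tw$ is no longer an isometry, so it cannot by itself be turned into a contraction, and Lemma \ref{lemma3} must be used to absorb the difference into a compact term. First I would reduce the vector Helmholtz scattering problem to an integral equation. Setting $\bF := \Vw(\bJ)$ and using $\Delta\Vw(\bJ) + \omega^2\Vw(\bJ) = -\bJ$, the PDE \eqref{Helmholtz_Vector} (whose right-hand side is $-(\epsilon^{-1}-I)\nabla\phi^{\In}$ with $\phi^{\In}=\nabla\cdot\bF^{\In}$) becomes, after substituting $\nabla\nabla\cdot\Vw(\bJ) = \Tw(\bJ) - \bJ/2$ and collecting the $\bJ$ terms into the factor $-\tfrac{1}{2}(\epsilon^{-1}+I)$,
\[
-\tfrac{1}{2}(\epsilon^{-1}+I)\bJ + (\epsilon^{-1}-I)\Tw(\bJ) = -(\epsilon^{-1}-I)\nabla\phi^{\In}.
\]
Multiplying by $-2(\epsilon^{-1}+I)^{-1}$ and writing $H_{\epsilon^{-1}} := (\epsilon^{-1}+I)^{-1}(\epsilon^{-1}-I)$, the analog of \eqref{rhodef}, yields $\bJ - 2H_{\epsilon^{-1}}\Tw(\bJ) = 2H_{\epsilon^{-1}}\nabla\phi^{\In}$, which is precisely \eqref{eq:intLap} with $\Tw$ in place of $\To$.

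Next I would split $\Tw = \To + (\Tw - \To)$, so that the integral equation takes the form $(I + B + K)\bJ = 2H_{\epsilon^{-1}}\nabla\phi^{\In}$ with $B := -2H_{\epsilon^{-1}}\To$ and $K := -2H_{\epsilon^{-1}}(\Tw-\To)$. I would then check the analog of Lemma \ref{lemma1}: since $\epsilon^{-1} = U D^{-1} U^*$ has eigenvalues whose real parts are bounded away from zero, the map $z \mapsto (z-1)/(z+1)$ carries them into a compact subset of the open unit disk, giving $\|H_{\epsilon^{-1}}\|_2 < 1$ uniformly. Combined with Lemma \ref{lemma2} (which gives $\|2\To\|=1$) this shows $\|B\| < 1$, so $I+B$ is invertible by the Neumann series. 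By Lemma \ref{lemma3}, $\Tw - \To$ is compact; hence $K$, a bounded multiplier composed with a compact operator and supported in $B_R$, is compact, and therefore so is $(I+B)^{-1}K$.

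The Fredholm alternative then reduces invertibility of $I + B + K = (I+B)\big(I + (I+B)^{-1}K\big)$ to injectivity, and this is the step I expect to be the crux, since it is where the uniqueness theory pays off. If $(I + B + K)\bJ = 0$, then $\bF = \Vw(\bJ)$ solves the homogeneous vector Helmholtz scattering problem, the volume potential automatically satisfying the radiation condition \eqref{radAHm}; Theorem \ref{AVHP_unique} then forces $\bF = 0$, and since $(\Delta+\omega^2)\bF = -\bJ$ this gives $\bJ = 0$. Thus $I + B + K$ is injective, hence invertible, and I can solve for $\bJ \in L^2(B_R)$.

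Finally, $\bF := \Vw(\bJ) \in H^2_{loc}(\mathbb{R}^3)$ solves the vector Helmholtz scattering problem, and by Lemma \ref{Lemma1_div} the function $\phi^{\Sc} := \nabla\cdot\bF$ solves the anisotropic scalar Helmholtz scattering problem, completing the proof. The main obstacle, relative to the electrostatic case, is exactly that one cannot invert $I+B+K$ by a Neumann series alone; it is the contraction-plus-compact decomposition (via Lemma \ref{lemma3}) together with the uniqueness theorem that allows the Fredholm argument to close.
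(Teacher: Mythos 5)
Your proposal is correct and follows essentially the same route as the paper: represent $\bF=\Vw(\bJ)$, reduce to $(I+B+K)\bJ=2H_{\epsilon^{-1}}\nabla\phi^{\In}$ with $B=-2H_{\epsilon^{-1}}\To$ a contraction (Lemmas \ref{lemma1}--\ref{lemma2}) and $K=-2H_{\epsilon^{-1}}(\Tw-\To)$ compact (Lemma \ref{lemma3}), then close the Fredholm argument with Theorem \ref{AVHP_unique} and finish via Lemma \ref{Lemma1_div}. The only differences are that you spell out the injectivity step and the extension of Lemma \ref{lemma1} to complex $\epsilon^{-1}$ more explicitly than the paper does, and your sign on the $\Tw$ term matches the Laplace equation \eqref{eq:intLap} rather than the paper's displayed \eqref{contractionK}; neither affects the argument.
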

\begin{proof}
Note first that
the vector field $\bF:=\Vw(\bJ)$ is a solution of 
eq. \eqref{Helmholtz_Vector} if and only if
\begin{equation}
-\bJ+(\epsilon^{-1}-I)\nabla\nabla\cdot \Vw(\bJ)=
-(\epsilon^{-1}-I)\nabla\nabla\cdot\bF^{\In}.
\end{equation}
Adding and substracting $\bJ/2$, this is equivalent to
\begin{equation}
-\bJ+(\epsilon^{-1}-I)\left(-\frac{\bJ}{2}+\Tw(\bJ)\right)=
-(\epsilon^{-1}-I)\nabla\nabla\cdot\bF^{\In}.
\end{equation}
Multiplying by $-2(\epsilon^{-1}+I)^{-1}$ we have
\begin{equation}\label{contractionK}
\bJ+2H_{\epsilon^{-1}}\To(\bJ)+2H_{\epsilon^{-1}}(\Tw-\To)(\bJ)=2H_{\epsilon^{-1}}\nabla\nabla\cdot\bF^{\In},
\end{equation}
where $H_{\epsilon^{-1}}$ is defined in \eqref{rhodef}.
By analogy with our earlier argument in the Laplace setting, 
we observe that the left-hand side of the resulting integral equation 
(\ref{contractionK}) is of 
the form $(I+B+K)\bJ$, where $I+B+K:L^2(B_R)\rightarrow L^2(B_R)$, with 
$B(\bJ) = 2\rho_{\epsilon}\To(\bJ)$,
$\|B\|<1$, and $K$ compact. Since $I+B$ is invertible, we can apply Fredholm 
theory directly.

Uniqueness for 
\eqref{contractionK} follows from Theorem \ref{AVHP_unique} and the
uniqueness of the representation $\bF=\Vw(\bJ)$.
It is shown in \cite{CK2} that
the operator $\Vw$ maps $L^2(B_R)\rightarrow H^2(B_R)$, 
so that $\mathbf{F}^{\Sc}:=\Vw(\bJ)\in H^2_{loc}(\mathbb{R}^3)$ 
for all $\bJ\in L^2(B_R)$. If, moreover, $\bJ$ satisfies \eqref{contractionK}, 
then by construction $\mathbf{F}^{\Sc}$ satisfies eq. (\ref{Helmholtz_Vector}), and $\nabla\cdot\mathbf{F}^{\Sc}\in H^1_{loc}(\mathbb{R}^3)$ satisfies eq. (\ref{Helmholtz_Scalar_weak}).
\end{proof}

To summarize: by solving the integral equation
\begin{equation}\label{eq:intH}
\Big( I+2H_{\epsilon^{-1}}\To+2H_{\epsilon^{-1}}(\Tw-\To) \Big) \bJ = 
2H_{\epsilon^{-1}}\nabla \phi^{\In},
\end{equation}
we obtain a solution to the vector Helmholtz scattering problem of the form
$\mathbf F =\Vw(\bJ)$. 
The function $\phi^{\Sc}:=\nabla\cdot \mathbf F$ 
provides a solution to the corresponding anisotropic Helmholtz scattering 
problem.  (The same result holds in 2D as well.)

\begin{remark}[Smoothness of the coefficients]
In this section we have assumed coefficients $\epsilon_{ij}(\bx)\in C^1(\mathbb{R}^3)$, instead of $L^{\infty}(\mathbb{R}^3)$ (as in the Laplace context)
 to be able to apply the unique continuation property. 
This regularity condition can be relaxed in various ways and the unique 
continuation property still holds. There is a vast literature on this subject
for second order elliptic partial differential equations 
(see \cite{ucp_4} for a good summary), following the early work of 
Carelman and M\"uller \cite{ucp_1,ucp_2}.
While it is known that $\epsilon_{ij}(\bx)\in L^{\infty}(\mathbb{R}^3)$
is too large a class of coefficients (due to counterexamples
\cite{ucp_0,ucp_00,ucp_000}), there has been a lot of effort at establishing
more general results \cite{ucp_3,ucp_5,ucp_6,ucp_7}.
The class of coefficients which are piecewise smooth where the jumps
occur on $C^2$ boundaries were studied in \cite{ucp_disc_1}.
In \cite{ucp_disc_2}, piecewise homogeneous objects were studied. 
It would be of great practical interest if
the unique continuation property holds for 
piecewise smooth coefficients, whose jumps occur on piecewise $C^2$ 
boundaries, allowing our integral formulation to be applicable to domains
with edges. This would follow naturally, since
the existence theorem (Theorem \ref{existence_Helmholtz}) 
only requires $\epsilon_{ij}(\bx)\in L^{\infty}(\mathbb{R}^3)$ and a 
uniqueness result for the anisotropic Helmholtz scattering problem. 
\end{remark}

\section{The anisotropic Maxwell's equations}
In this section we assume that $\epsilon(\bx)$ and $\mu(\bx)$ are real, 
symmetric $3\times 3$ matrices, uniformly positive definite with entries 
$\epsilon_{ij}(\bx), \mu_{ij}(\bx)\in C^2(\mathbb{R}^3)$. We also assume
that $\epsilon(\bx)-I$ and $\mu(\bx)-I$ have compact support, where I is 
the $3\times3$ identity matrix.

\begin{definition}
By the anisotropic Maxwell scattering problem, 
we mean the determination of functions 
$\bE^{\Sc},\bH^{\Sc}\in H_{loc}(curl,\mathbb{R}^3)$ (see \cite{Hodge_L2} 
for further details) such that:
\begin{equation}\label{Maxwell}
\begin{aligned}
\nabla\times\bE^{\Sc}(\bx)-i\omega\mu(\bx)\bH^{\Sc}(\bx)&=+i\omega(\mu(\bx)-I)\bH^{\In}(\bx),\\
\nabla\times\bH^{\Sc}(\bx)+i\omega\epsilon(\bx)\bE^{\Sc}(\bx)&=-i\omega(\epsilon(\bx)-I)\bE^{\In}(\bx),\\
\end{aligned}
\end{equation}
where the incoming field $\bE^{\In},\bH^{\In}$ satisfy the free space Maxwell's equations with $\epsilon(\bx)=I,\mu(\bx)=I$,
and the radiation condition:
\begin{equation}\label{EMrad}
\begin{array}{ll}
\bH^{\Sc}(\bx)\times\frac{\bx}{|\bx|}-\bE^{\Sc}(\bx)= \Big(\frac{1}{|\bx|}\Big)
& {\rm as}\ |\bx|\rightarrow \infty,
\end{array}
\end{equation}
uniformly in all directions $\frac{\bx}{|\bx|}$.
\end{definition}

\begin{thm}[Uniqueness]
The anisotropic Maxwell scattering problem has a unique solution.
\end{thm}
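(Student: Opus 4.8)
The plan is to adapt the template used for the scalar Helmholtz problem (Theorem \ref{Unique_Helmholtz_scalar}): assume the incoming data vanish, derive an energy identity forcing the scattered field to radiate zero net real power, invoke a Rellich-type lemma to annihilate the field in the exterior, and then use unique continuation to propagate the vanishing into the scatterer. As in the earlier uniqueness results, it suffices to show that the homogeneous problem admits only the trivial solution, so I would first reduce to $\nabla\times\bE^{\Sc}=i\omega\mu\bH^{\Sc}$, $\nabla\times\bH^{\Sc}=-i\omega\epsilon\bE^{\Sc}$ with the Silver-M\"uller radiation condition \eqref{EMrad}, and choose a ball $B_R$ containing the supports of $\epsilon-I$ and $\mu-I$.

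The key computation is the divergence identity for the (conjugated) Poynting vector, $\nabla\cdot(\bE^{\Sc}\times\overline{\bH^{\Sc}})=\overline{\bH^{\Sc}}\cdot(\nabla\times\bE^{\Sc})-\bE^{\Sc}\cdot(\nabla\times\overline{\bH^{\Sc}})$. Substituting the field equations and using that $\epsilon$ is real, so that $\nabla\times\overline{\bH^{\Sc}}=i\omega\epsilon\overline{\bE^{\Sc}}$, gives $\nabla\cdot(\bE^{\Sc}\times\overline{\bH^{\Sc}})=i\omega\bigl(\overline{\bH^{\Sc}}\cdot\mu\bH^{\Sc}-\overline{\bE^{\Sc}}\cdot\epsilon\bE^{\Sc}\bigr)$. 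Integrating over $B_R$ and noting that both quadratic forms on the right are real (they are Hermitian forms built from the real symmetric matrices $\mu,\epsilon$), the entire volume integral is purely imaginary, and hence $\Re\int_{\partial B_R}\bn\cdot(\bE^{\Sc}\times\overline{\bH^{\Sc}})\,dS=0$. This is the analogue of the sign condition on $\Im\int_{\partial B_R}\phi^{\Sc}\,\partial_n\overline{\phi^{\Sc}}\,dS$ obtained in the scalar case.

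Next, in the exterior $\mathbb{R}^3\setminus B_R$ the fields satisfy the constant-coefficient Maxwell equations, so by interior regularity (Corollary 8.11, \cite{elliptic_pde}) they are smooth and the radiation condition holds in the strong sense. The vanishing of the net real Poynting flux together with the Silver-M\"uller condition is precisely the hypothesis of the Rellich-type uniqueness lemma for exterior Maxwell problems in \cite{CK1}, from which I would conclude $\bE^{\Sc}=\bH^{\Sc}=0$ throughout $\mathbb{R}^3\setminus B_R$. In particular, by continuity of the tangential traces across $\partial B_R$ for $H_{loc}(\operatorname{curl})$ fields, both $\bn\times\bE^{\Sc}$ and $\bn\times\bH^{\Sc}$ vanish on $\partial B_R$.

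Finally, inside $B_R$ we have a solution of the homogeneous anisotropic system with vanishing tangential Cauchy data on the boundary. Eliminating one field produces a second-order elliptic system whose principal part is governed by $\epsilon$ and $\mu$; since these are assumed to lie in $C^2(\mathbb{R}^3)$, a unique continuation principle applies and forces $\bE^{\Sc}=\bH^{\Sc}=0$ in $B_R$ as well. I expect this interior step to be the main obstacle: unlike the scalar setting, where each component satisfies a scalar Helmholtz equation and H\"ormander's theorem \cite{hormander_III} applies directly, one needs unique continuation for the \emph{variable-coefficient, anisotropic} Maxwell system. This is exactly why the stronger $C^2$ regularity on $\epsilon,\mu$ is imposed in this section, and I would carry it out either by invoking an established Carleman-estimate-based UCP for Maxwell systems or by reducing, after a careful decoupling, to a scalar elliptic unique continuation result.
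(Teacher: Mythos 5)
Your proposal follows essentially the same route as the paper's proof: the real part of the Poynting flux $\int_{\partial B_R}\bn\times\bE^{\Sc}\cdot\overline{\bH}^{\Sc}\,dS$ is shown to vanish via the energy identity with real symmetric $\epsilon,\mu$, the Rellich-type lemma for the exterior Maxwell problem gives $\bE^{\Sc}=\bH^{\Sc}=0$ outside $B_R$, and unique continuation (for which the paper simply cites Leis) propagates the vanishing into the scatterer. The UCP step you flag as the main obstacle is indeed the delicate point; the paper handles it by citation and discusses the required regularity of the coefficients in a subsequent remark.
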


\begin{proof}
The proof is standard and based on the Rellich Lemma \cite{CK2}. 
Taking a ball $B_R$ that contains the support of $\epsilon(\bx)-I$ and $\mu(\bx)-I$, we have
\begin{equation}
\Re\Bigg(\int_{\partial B_R}\bn\times\bE^{\Sc}\cdot\overline{\bH}^{\Sc}dS\Bigg)=\Re\Bigg(i\omega\int_{B_R}\overline{\bH}^{\Sc T}\mu\bH^{\Sc}+\bE^{\Sc}\overline{\epsilon}\overline{\bE}^{\Sc}dV\Bigg)=0.
\end{equation}
Since $\bE^{\Sc}, \bH^{\Sc}$ are analytic in $\mathbb{R}^3/B_R$, 
we have that $\bE^{\Sc}(\bx)=\bH^{\Sc}(\bx)=0$ for all 
$\bx\in \mathbb{R}^3/B_R$. 
Using the unique continuation property \cite{Leis}, 
we obtain  $\bE^{\Sc}(\bx)=\bH^{\Sc}(\bx)=0$ for all $\bx\in \mathbb{R}^3$.
\end{proof}

In order to establish existence, we extend the technique described in
earlier sections.

\begin{thm}[Existence]
The anisotropic Maxwell scattering problem has a solution.
\end{thm}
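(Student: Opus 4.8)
The plan is to reproduce, for the coupled field equations, the same template used for the Laplace and Helmholtz problems: represent the scattered fields by outgoing volume potentials acting on unknown densities, substitute into \eqref{Maxwell} to obtain a second-kind integral equation, expose its structure as $I+B+K$ with $\|B\|<1$ and $K$ compact, and then combine the Fredholm alternative with the uniqueness theorem just proved. Concretely, I would introduce an electric and a magnetic contrast current $\bJ_e,\bJ_m$, supported in a ball $B_R$ containing the support of $\epsilon-I$ and $\mu-I$, and represent the scattered field as
\[
\bE^{\Sc} = (\omega^2+\nabla\nabla\cdot)\,\Vw(\bJ_e) - i\omega\,\nabla\times\Vw(\bJ_m),
\]
with an analogous expression for $\bH^{\Sc}$ in which the roles of $\bJ_e$ and $\bJ_m$ are exchanged. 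Because each potential is built from the outgoing Green's function used in $\Vw$, this representation satisfies the Silver--M\"uller condition \eqref{EMrad} automatically. Tying $\bJ_e$ to $(\epsilon-I)\bE$ and $\bJ_m$ to $(\mu-I)\bH$ through the constitutive contrast and substituting into \eqref{Maxwell} yields a $2\times 2$ system of integral equations for $(\bJ_e,\bJ_m)$, in which each self-current enters through the operator $(\omega^2+\nabla\nabla\cdot)\Vw$ and the opposite current enters only through the curl potential $\nabla\times\Vw$.

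The decisive identity is $(\omega^2+\nabla\nabla\cdot)\Vw(\bJ)=\Tw(\bJ)-\tfrac{\bJ}{2}+\omega^2\Vw(\bJ)$, which isolates the only non-smoothing piece, $\Tw$, on the diagonal of the system, while the coupling terms $\nabla\times\Vw$ sit off the diagonal. Writing $\Tw=\To+(\Tw-\To)$ and multiplying the two equations by $-2(\epsilon+I)^{-1}$ and $-2(\mu+I)^{-1}$ respectively, exactly the algebraic step leading to \eqref{eq:intLap} and \eqref{contractionK}, recasts the system as $(I+B+K)(\bJ_e,\bJ_m)=\text{RHS}$, where $B=\mathrm{diag}(2H_{\epsilon}\To,\,2H_{\mu}\To)$ is block diagonal and $K$ gathers every compact contribution: the differences $\Tw-\To$ (compact by Lemma \ref{lemma3}), the smoothing terms $\omega^2\Vw$, and the off-diagonal operators $\nabla\times\Vw$, which are compact because $\Vw:L^2(B_R)\to H^2(B_R)$ and $\nabla\times$ then maps into $H^1(B_R)$, compactly embedded in $L^2(B_R)$ by Rellich's theorem. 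Since $\epsilon$ and $\mu$ are real, symmetric and uniformly positive definite, Lemma \ref{lemma1} gives $\|H_{\epsilon}\|_2<1$ and $\|H_{\mu}\|_2<1$; combined with Lemma \ref{lemma2}, the block-diagonal operator obeys $\|B\|=\max(\|2H_{\epsilon}\To\|,\|2H_{\mu}\To\|)<1$ on $L^2(B_R)\times L^2(B_R)$, so $I+B$ is invertible via its Neumann series.

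Factoring $I+B+K=(I+B)\big(I+(I+B)^{-1}K\big)$ with $(I+B)^{-1}K$ compact reduces existence to injectivity, and injectivity of the homogeneous equation follows from the Maxwell uniqueness theorem just established together with the uniqueness of the representation $\bF=\Vw(\bJ)$, precisely as in the Helmholtz case. Recovering $(\bJ_e,\bJ_m)\in L^2(B_R)\times L^2(B_R)$ and applying $\Vw:L^2\to H^2$ then yields scattered fields in $H_{loc}(\mathrm{curl},\mathbb{R}^3)$ that solve \eqref{Maxwell} and satisfy \eqref{EMrad}. The main obstacle I expect is not the Fredholm machinery but the bookkeeping required to put the genuinely coupled $\epsilon$--$\mu$ system into this clean $I+B+K$ form: one must check that the only non-compact operator is the diagonal $\Tw$, that the curl couplings really are compact, and that left-multiplication by $(\epsilon+I)^{-1}$ and $(\mu+I)^{-1}$ preserves the contraction bound simultaneously in both blocks (this is where the real, symmetric, positive-definite hypothesis on $\epsilon$ and $\mu$ is essential). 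A secondary point needing care is verifying that a solution of the integral equation reproduces both the regularity and the radiation condition \eqref{EMrad}, thereby closing the equivalence with the PDE.
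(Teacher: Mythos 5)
Your proposal is correct and follows essentially the same route as the paper: the same contrast-current (``JM'') representation via outgoing volume potentials, the same splitting $\Tw=\To+(\Tw-\To)$ and left-multiplication by $-2(\epsilon+I)^{-1}$, $-2(\mu+I)^{-1}$ to reach an $I+B+K$ system with $\|B\|<1$ by Lemmas \ref{lemma1} and \ref{lemma2}, and the same appeal to Fredholm theory plus the Maxwell uniqueness theorem. The only difference is that you spell out details the paper leaves implicit (compactness of the off-diagonal curl couplings and of $\omega^2\Vw$, and the factorization $(I+B)\big(I+(I+B)^{-1}K\big)$), which is a useful but not substantively different elaboration.
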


\begin{proof}
We begin by rewriting the
anisotropic Maxwell scattering problem in a manner such that the 
variable coefficient terms only appear in the right hand side.
\begin{equation}\label{MaxwellCST}
\begin{aligned}
\nabla\times\bE^{\Sc}(\bx)-i\omega\bH^{\Sc}(\bx)&=+i\omega(\mu(\bx)-I)
\left(\bH^{\In}(\bx)+\bH^{\Sc}(\bx)\right),\\
\nabla\times\bH^{\Sc}(\bx)+i\omega\bE^{\Sc}(\bx)&=-i\omega(\epsilon(\bx)-I)
\left(\bE^{\In}(\bx)+\bE^{\Sc}(\bx)\right).\\
\end{aligned}
\end{equation}
We now define the right-hand sides as volume (polarization) currents:
\begin{equation}\label{eqs:JMdef}
\begin{aligned}
\bJ_V(\bx)&:=-i\omega(\epsilon(\bx)-I)\left(\bE^{\In}(\bx)+\bE^{\Sc}(\bx)\right),\\
\bM_V(\bx)&:=-i\omega(\mu(\bx)-I)\left(
\bH^{\In}(\bx)+\bH^{\Sc}(\bx)\right).\\
\end{aligned}
\end{equation}

Assume now that $\bE^{\Sc},\bH^{\Sc}\in C^1(\mathbb{R}^3)$ and that they
satisfy the constant coefficient Maxwell system:
\begin{equation}\label{PDEJM}
\begin{aligned}
\nabla\times\bE^{\Sc}(\bx)-i\omega\bH^{\Sc}(\bx)&=-\bM_V(\bx),\\
\nabla\times\bH^{\Sc}(\bx)+i\omega\bE^{\Sc}(\bx)&= \bJ_V(\bx).
\end{aligned}
\end{equation}
Then, applying the Stratton-Chu formulas (eq. 6.5 in \cite{CK2}), 
the following representation formula holds:
\begin{equation}\label{JM_formulation}
\begin{aligned}
\left( \begin{array}{cc}
\bE^{\Sc}\\
\bH^{\Sc} \end{array}\right)=
\left( \begin{array}{cc}
\frac{-1}{i\omega}(\nabla\nabla\cdot+\omega^2)\Vw& -\nabla\times \Vw\\
\nabla\times \Vw & \frac{-1}{i\omega}(\nabla\nabla\cdot+\omega^2)\Vw\end{array}\right)
\left( \begin{array}{cc}
\bJ_V \\
\bM_V \end{array}\right).
\end{aligned}
\end{equation}
Conversely, if $\bE^{\Sc}, \bH^{\Sc},\bJ_V,\bM_V$ satisfy 
\eqref{JM_formulation}, then it is a straightforward
computation to show that they satisfy \eqref{PDEJM}. 
Assuming that $\bJ_V,\bM_V$ are defined by \eqref{eqs:JMdef}, it is then
immediate to see that $\bE^{\Sc}, \bH^{\Sc}$ satisfy \eqref{MaxwellCST}, and 
equivalently \eqref{Maxwell}. Thus, we have proven that the PDE 
system with inhomogeneous coefficients \eqref{Maxwell} is equivalent to 
the integral formulation \eqref{JM_formulation}-\eqref{eqs:JMdef}.
It is also easy to show that the equivalence holds true for 
fields in $H_{loc}(curl,\mathbb{R}^3)$.

Eliminating $\bE^{\Sc}, \bH^{\Sc}$ from 
\eqref{eqs:JMdef}-\eqref{JM_formulation}, we obtain the following 
integral equation:
\begin{equation}\label{contraction_Maxwell}
\begin{aligned}
\bJ-H_{\epsilon}2T_{\omega}(\bJ)-2H_{\epsilon}\omega^2V_{\omega}(\bJ)-i\omega2H_{\epsilon}\nabla\times V_{\omega}(\bM) &=2H_{\epsilon}\bE^{\In},\\
\bM-H_{\mu}2T_{\omega}(\bM)-2H_{\mu}\omega^2V_{\omega}(\bM)+i\omega2H_{\mu}\nabla\times V_{\omega}(\bJ) &=2H_{\mu}\bH^{\In},\\
\end{aligned}
\end{equation}
where $H_{\epsilon}$ is defined in \eqref{rhodef} and
$H_{\mu}(\bx):=(\mu(\bx)+I)^{-1}(\mu(\bx)-I)$. We also make the change of
variables
\begin{equation}
\bJ:=\frac{\bJ_V}{-i\omega},\qquad
\bM:=\frac{\bM_V}{-i\omega},
\end{equation}
to avoid low frequency breakdown (that is, instability of the representation
as $\omega \rightarrow 0$).

Using an approach similar to that in previous sections
(in the function space $L^2(B_R)\times L^2(B_R)$),
we can write eq. \eqref{contraction_Maxwell} in the form 
\[ \big(I+C+K \big) 
\left( \begin{array}{cc}
\bJ \\
\bM \end{array}\right)
=
\left( \begin{array}{cc}
2\rho_{\epsilon}\bE^{\In} \\
2\rho_{\mu}\bH^{\In}
\end{array}\right) \, ,
\] 
where $C$ is a contraction and $K$ is compact. 
We now use the standard representation for electromagnetic fields
in terms of electric and magnetic currents:
\begin{equation}\label{represent_Maxwell}
\begin{aligned}
\bE^{\Sc}&=\nabla\nabla\cdot V_{\omega}(\bJ)+\omega^2V_{\omega}(\bJ)+i\omega\nabla\times V_{\omega}(\bM),\\
\bH^{\Sc}&=\nabla\nabla\cdot V_{\omega}(\bM)+\omega^2V_{\omega}(\bM)-i\omega\nabla\times V_{\omega}(\bJ).
\end{aligned}
\end{equation}
Since the operators involved in (\ref{represent_Maxwell}) 
map $L^2(B_R)$ into $H_{loc}(curl, \mathbb{R}^3)$, uniqueness of the 
anisotropic Maxwell scattering problem implies uniqueness, and hence, 
existence for the integral equation \eqref{contraction_Maxwell}.
Finally, this yields existence for the anisotropic Maxwell scattering problem
itself.
\end{proof}

To summarize, solving the integral system
\begin{equation}\label{eq:intM}
\begin{aligned}
\bJ-\rho_{\epsilon}2T_{\omega}(\bJ)-2\rho_{\epsilon}\omega^2V_{\omega}(\bJ)-i\omega2\rho_{\epsilon}\nabla\times V_{\omega}(\bM) &=2\rho_{\epsilon}\bE^{\In},\\
\bM-\rho_{\mu}2T_{\omega}(\bM)-2\rho_{\mu}\omega^2V_{\omega}(\bM)+i\omega2\rho_{\mu}\nabla\times V_{\omega}(\bJ) &=2\rho_{\mu}\bH^{\In},\\
\end{aligned}
\end{equation}
and computing 
\begin{equation}
\begin{aligned}
\bE^{\Sc}&=\nabla\nabla\cdot V_{\omega}(\bJ)+\omega^2V_{\omega}(\bJ)+i\omega\nabla\times V_{\omega}(\bM),\\
\bH^{\Sc}&=\nabla\nabla\cdot V_{\omega}(\bM)+\omega^2V_{\omega}(\bM)-i\omega\nabla\times V_{\omega}(\bJ),
\end{aligned}
\end{equation}
provides a solution to the anisotropic Maxwell scattering problem.

For the particular case $\mu=I$, we have the following, simpler 
integral equation:
\begin{equation}\label{contraction_Maxwell_reduced}
\begin{aligned}
\bJ-\rho_{\epsilon}2T_{\omega}(\bJ)-2\rho_{\epsilon}\omega^2V_{\omega}(\bJ) &=2\rho_{\epsilon}\bE^{\In},
\end{aligned}
\end{equation}
and the corresponding representation
\begin{equation}\label{represent_Maxwell_reduced}
\begin{aligned}
\bE^{\Sc}&=\nabla\nabla\cdot V_{\omega}(\bJ)+\omega^2V_{\omega}(\bJ), \\
\bH^{\Sc}&=-i\omega\nabla\times V_{\omega}(\bJ).
\end{aligned}
\end{equation}

A closely related integral formulation (using a slightly different scaling)
is widely used \cite{Volume_IEEE_1,Volume_IEEE_2,Volume_IEEE_3}, 
and known as the ``JM" volume integral formulation.

\begin{remark}[Non-smooth coefficients and lossy materials]
The unique continuation property for the Maxwell system 
(\ref{Maxwell}) has been extended to the case 
$\epsilon_{ij}(\bx),\mu_{ij}(\bx)\in C^1(\mathbb{R}^3)$ in 
\cite{ucp_Maxwell_C1} and to the case of Lipschitz coefficients 
$\epsilon_{ij}(\bx),\mu_{ij}(\bx)\in W^{1,\infty}(B_R)$ in 
\cite{ucp_Maxwell_Lip_1,ucp_Maxwell_Lip_2}. In both settings, 
$\epsilon(\bx)$ and $\mu(\bx)$ are assumed to be real (no dissipation). 

Lossy materials for which
the unique continuation property has been shown to hold 
\cite{Potthast2}
includes the case when $\mu(\bx)=I$ and $\epsilon(\bx)$ has entries 
$\epsilon_{ij}(\bx)\in C^3(\mathbb{R}^3)$ 
with $\epsilon(\bx)=U_1(\bx)D_{\epsilon}(\bx)U_1^*(\bx)$, 
where $U_1(\bx)$ is a unitary complex matrix
and $D_{\epsilon}(\bx)$ is diagonal with diagonal entries 
whose real parts are positive and bounded away from zero 
and whose imaginary parts are non-negative. 

Note that in the proof of existence described in the previous theorem, 
$\epsilon(\bx)$ and $\mu(\bx)$ are assumed to be real symmetric, 
with entries in $C^2(\mathbb{R}^3)$. 
Assuming the unique continuation property holds,
extension to the complex dissipative case where both matrices 
$\epsilon(\bx)$ and $\mu(\bx)$ have $L^{\infty}$ entries is straightforward. 
By this, we mean that 
$\epsilon(\bx)=U_1(\bx)D_{\epsilon}(\bx)U_1^*(\bx)$, 
$\mu(\bx)=U_2(\bx)D_{\mu}(\bx)U_2^*(\bx)$, 
where $D_{\epsilon}$ and $D_{\mu}$ have diagonal entries with strictly positive real part and non-negative 
imaginary part.
For further discussion, see 
\cite{ucp_Maxwell_general,ucp_Maxwell_general_2,ucp_Maxwell_general_3}.
\end{remark}

\section{Numerical results}

We illustrate the performance of our approach by 
solving the integral equations \eqref{eq:intH} and 
\eqref{contraction_Maxwell_reduced}. 
We begin with a uniform $n \times n \times n$ 
mesh on which we discretize the incoming field, the
material properties, and the unknown solution vectors $\bJ$ and/or $\bM$.
We apply the various integral operators that arise
using Fourier methods, as described in \cite{Volintfft}.
Very briefly, the method proceeds
by (a) truncating the governing free-space Green's function (limited
to the user-specified range over which we seek the solution),
(b) transforming the truncated kernel - yielding a
smooth function in Fourier space, and (c) imposing a high frequency 
cutoff defined by the grid spacing of the resolving mesh. Assuming 
that the data is well-resolved on this mesh, the method achieves high-order 
(superalgebraic) convergence.
The linear systems are solved iteratively, using Bi-CGStab \cite{bicgstab}.

For the sake of simplicity, 
we let $\mu(\bx)=I$ and study the influence of $\epsilon$ on the 
behavior of the numerical method. There are three parameters to 
consider.  First is the {\em contrast}, defined as the maximum ratio 
between the eigenvalues of $\epsilon$ and the background dielectric constant.
Second is the level of anisotropy, determined by the ration of the
eigenvalues of $\epsilon$ (as well as rotations of $\epsilon$ to 
nondiagonal form).
 
We assume that the computational domains is set to $[0,1]^3$.
We define a bump function 
\[
W(x,y,z) = e^{-\big(\frac{x-0.5}{0.25}\big)^8}e^{-\big(\frac{y-0.5}{0.25}\big)^8}e^{-\big(\frac{z-0.5}{0.25}\big)^8}.
\]
which has decayed to zero at the edge of the computational domain to machine precision.

\subsection{Isotropic scattering from a highly oscillatory structure}

In our first example, we consider the interaction of an
electromagnetic wave $\bE^{\In} =(0,0,\exp (i\omega x))$ with 
a highly oscillatory but
locally isotropic permittivity:
 \begin{equation}\label{1stEx}
  \epsilon(x,y,z)=\Big(
1+W(x,y,z)\big(1+.1\sin(\omega x)\sin(\omega y)\sin(\omega z)\big)
\Big)I,
  \end{equation}
where $I$ is the $3\times3$ identity matrix and $\omega=408$.
Note that the contrast is approximately 2 and that 
the magnitude of the oscillation is relatively small: 10\% of the 
magnitude of the bump function $W(x,y,z)$ itself. Nevertheless,
to resolve $\epsilon$ at 2 points per wavelength
requires at least 200 points in each component direction.
We plot the $z$ component of the scattered field $\bE^{\Sc}$ in 
Figure \ref{FigScat} after solving the integral equation 
\eqref{contraction_Maxwell_reduced}. 

\begin{figure}[htb]
\begin{center}
\includegraphics[width=4in,trim={3.8cm 7.7cm 3.5cm 7.7cm},clip]{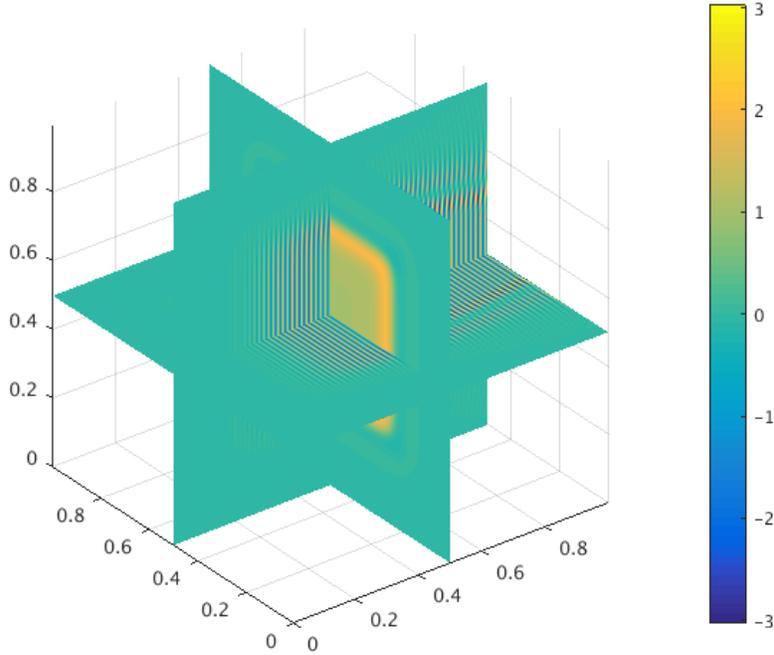}%
\end{center}
\caption{The $z$-component of the electric field,
when solving the Maxwell scattering problem \eqref {Maxwell} with $\omega=408$, $\mu=I$ and $\epsilon$ defined in \eqref{1stEx}.
The solution is obtained 
from the representation \eqref{represent_Maxwell_reduced} and the 
corresponding integral equation 
\eqref{contraction_Maxwell_reduced}, discretized with 
$650^3$ points.}
\label{FigScat}
\end{figure}

Since we do not have an exact solution for this problem, we carry out a 
numerical convergence study, using a $326 \times 326 \times 326$ grid followed
by a $650 \times 650 \times 650$ grid, suggesting that 
six digits of accuracy have been achieved
on the coarser grid in both the $L_2$ and $L_\infty$ norms.
{ The calculation required 61 matrix-vector multiplies and 152 minutes on an Intel Xeon 2.5GHz workstation with 60 cores and 1.5 terabytes of memory.}

 \subsection{Strong isotropic and anisotropic scattering}

To study the behavior of our integral equation formulation 
at higher contrast over a range
of frequencies, we consider two additional locally isotropic examples
and two anisotropic ones.
For the isotropic cases, we let
\[
 \epsilon_{222}(x,y,z) := \big(1+W(x,y,z)\big) I, \qquad
 \epsilon_{444}(x,y,z) = \big(1+3W(x,y,z)\big) I.
\] 
Note that $\epsilon_{222}$ has a maximum contrast of 2, while
$\epsilon_{444}$ has a maximum contrast of 4.
For the anisotropic examples, we let 
\begin{equation}\label{eps}
\epsilon_{234}(x,y,z)=\left( \begin{array}{ccc}
\epsilon_{1} & 0 & 0\\
0 & \epsilon_{2} & 0\\
0 & 0 & \epsilon_{3}\\
\end{array} \right),
\text{ with }
\left\{
\begin{array}{l}
\epsilon_{1}(x,y,z)=1+W(x,y,z),\\
\epsilon_{2}(x,y,z)=1+2W(x,y,z),\\
\epsilon_{3}(x,y,z)=1+3W(x,y,z),
\end{array}\right.
\end{equation} 
and
\begin{equation} \label{eps_dense}
\epsilon_{dense}(x,y,z)=R_2(\phi)R_1(\theta)\left( \begin{array}{ccc}
\epsilon_{1} & 0 & 0\\
0 & \epsilon_{2} & 0\\
0 & 0 & \epsilon_{3}\\
\end{array} \right)R_1(\theta)'R_2(\phi)'
\end{equation}
where $\epsilon_{1}$, $\epsilon_{2}$, $\epsilon_{3}$ 
are defined in \eqref{eps}, and the rotation matrices 
$R_1$ and $R_2$ are given by 
\begin{equation}
\begin{aligned}
R_1(\theta)&=\left( \begin{array}{ccc}
1&0&0\\
0&\cos(\theta) & -\sin(\theta)\\
0&\sin(\theta) & \cos(\theta)
\end{array} \right), \quad
R_2(\phi)&=\left( \begin{array}{ccc}
\cos(\phi) & -\sin(\phi) & 0 \\
\sin(\phi) & \cos(\phi) & 0\\
0&0&1
\end{array} \right),
\end{aligned}
\end{equation}
with $\phi(x,y,z)=\pi x$ and $\theta(x,y,z)=\pi y$.

We first examine
the performance of Bi-CGStab, plotting the 
number of iterations required to achieve a tolerance of $10^{-14}$ 
as a function of the frequency $\omega$ for both the Helmholtz and 
Maxwell scattering problems on a $150 \times 150 \times 150$ grid
(Fig. \ref{FigComp}).
As expected, the number of iterations increases with frequency. 
Moreover, for a fixed frequency, the number of iterations increases 
with the contrast. Note, however, that the anisotropy and rotation 
have only limited impact on the number of iterations. 
Clearly, while the method is robust at low frequencies, these calculations
remain challenging in strong scattering regimes. 

\begin{figure}[H]
\begin{center}
\includegraphics[width=6in]{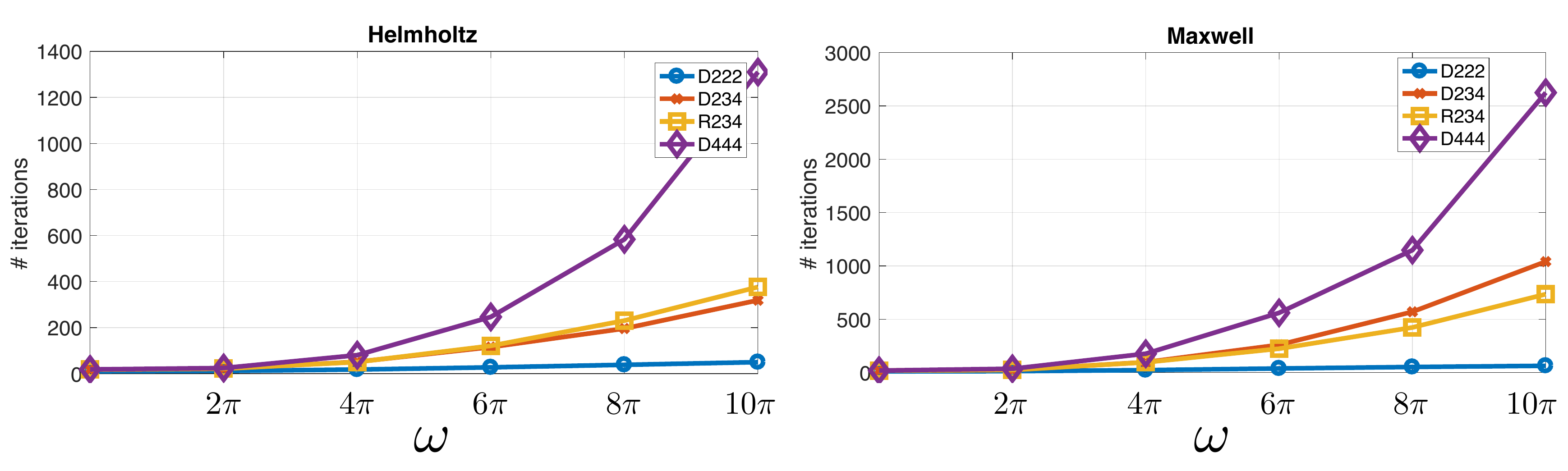}%
\end{center}
\caption{Number of iterations for convergence of  Bi-CGStab with a
tolerance of $10^{-14}$, as a function of the frequency $\omega$ when
applied to the 
Helmholtz integral equation \eqref{eq:intH} (left) and the 
Maxwell integral equation \eqref{contraction_Maxwell_reduced} (right).
The labels D222, D234, R234 and D444 correspond to 
$\epsilon_{222}$, $\epsilon_{234}$, $\epsilon_{dense}$, and $\epsilon_{444}$,
respectively.
}
\label{FigComp}
\end{figure}

Additional details regarding numerical experiments are provided in 
Tables \ref{table2}-\ref{table5}.
Note that the number of iterations is more or less constant
for each fixed problem as the mesh is refined (consistent with the expected
behavior of a second kind Fredholm equation).
Note also that the number of iterations for the 
diagonally anisotropic case $\epsilon_{234}$ (Table \ref{table3}) is about the
same as for the case $\epsilon_{dense}$, 
where the principal axes have been rotated throughout the domain 
(Table \ref{table4}).

In the example $\epsilon_{444}$, 
the number of iterations is significantly
worse than in any of the other cases, even though it is 
locally isotropic. Thus, the behavior of the 
proposed integral equation (\ref{contraction_Maxwell_reduced}) appears 
to be controlled by the contrast and frequency more than by 
anisotropy. The scatterer is approximately
$\left(\frac{\omega}{2 \pi} \, \sqrt{4} \right)^3$ cubic wavelengths in
size - on the order of 1000 for the largest value of $\omega$.
Thus, it is not surprising that Bi-CGStab requires
many iterations to converge.

\begin{table}[H]
\caption{Summary of numerical results for the locally 
isotropic permittivity tensor $\epsilon_{222}$.
The size is defined to be the number of wavelengths of the 
incoming field across the unit box supporting the perturbation $\epsilon$,
namely $\frac{\omega}{2 \pi}$.
$N_{tot}$ denotes the total number of unknowns, $n_{side}$ denotes
the number of points in each linear dimension, $E_2$ denotes the $L_2$ error,
$E_\infty$ denotes the $L_\infty$ error, $N_{matvec}$ denotes the 
number of matrix-vector multiplies required, and $Time$ 
denotes the total time required for the computation in seconds.}
\centering
\begin{tabular}{crccccc}  \\ [-0.3cm]
\hline
Size $\left( \frac{\omega}{2 \pi} \right)$ 
& $N_{tot}$ & $n_{side}$ & $E_2$ & $E_{\infty}$ & 
$N_{matvec}$ & $Time$ $(s)$ \\
\hline \\ [-0.3cm]
$10^{-50}$ & $1.03 \, 10^6$ & $70$ & $1.5\times10^{-7}$ & $1.1\times10^{-6}$ & $22$ & $6.4$ \\ [0.1cm]
$10^{-50}$ & $5.18 \, 10^6$ & $120$ & $1.2\times10^{-11}$ & $7.8\times10^{-11}$ & $22$ & $27.7$ \\ [0.1cm]
$10^{-50}$ & $1.75 \, 10^7$ & $180$ & $1.6\times10^{-16}$ & $1.0\times10^{-15}$ & $22$ & $93.6$ \\ [0.1cm]
\hline
$10^{-10}$ & $1.03 \, 10^6$ & $70$ & $1.5\times10^{-7}$ & $1.1\times10^{-6}$ & $22$ & $6.6$ \\ [0.1cm]
$10^{-10}$ & $5.18 \, 10^6$ & $120$ & $1.2\times10^{-11}$ & $7.8\times10^{-11}$ & $22$ & $27.5$ \\ [0.1cm]
$10^{-10}$ & $1.75 \, 10^7$ & $180$ & $1.6\times10^{-16}$ & $1.0\times10^{-15}$ & $22$ & $91.9$ \\ [0.1cm]
\hline
$1$ & $1.03 \, 10^6$ & $70$ & $1.5\times10^{-7}$ & $8.2\times10^{-7}$ & $27$ & $8.1$ \\ [0.1cm]
$1$ & $5.18 \, 10^6$ & $120$ & $1.1\times10^{-11}$ & $6.0\times10^{-11}$ & $27$ & $33.3$ \\ [0.1cm]
$1$ & $1.75 \, 10^7$ & $180$ & $2.4\times10^{-16}$ & $8.6\times10^{-16}$ & $27$ & $110.1$ \\ [0.1cm]
\hline
$20$ & $1.03 \, 10^6$ & $70$ & $7.3\times10^{-4}$ & $1.1\times10^{-3}$ & $469$ & $134.1$ \\ [0.1cm]
$20$ & $5.18 \, 10^6$ & $120$ & $4.9\times10^{-10}$ & $1.2\times10^{-9}$ & $426$ & $487.5$ \\ [0.1cm]
$20$ & $1.75 \, 10^7$ & $180$ & $1.9\times10^{-14}$ & $1.8\times10^{-14}$ & $426$ & $1614.8$ \\ [0.1cm]
\hline
\end{tabular}
\label{table2}
\end{table}

\begin{table}[H]
\caption{Summary of numerical results for the anisotropic
permittivity tensor $\epsilon_{234}$. See 
Table \ref{table2} for an explanation of the column headers.}
\centering
\begin{tabular}{crccccc}  \\ [-0.3cm]
\hline
Size $\left( \frac{\omega}{2 \pi} \right)$ 
& $N_{tot}$ & $n_{side}$ & $E_2$ & $E_{\infty}$ & $N_{matvec}$ & $Time$ $(s)$ \\ 
\hline \\ [-0.3cm]
$10^{-50}$ & $1.03 \, 10^6$ & $70$ & $3.1\times10^{-6}$ & $1.9\times10^{-5}$ & $37$ & $10.9$ \\ [0.1cm]
$10^{-50}$ & $5.18 \, 10^6$ & $120$ & $1.7\times10^{-9}$ & $8.5\times10^{-9}$ & $37$ & $44.9$ \\ [0.1cm]
$10^{-50}$ & $1.75 \, 10^7$ & $180$ & $1.7\times10^{-13}$ & $9.9\times10^{-13}$ & $37$ & $148.8$ \\ [0.1cm]
\hline
$10^{-10}$ & $1.03 \, 10^6$ & $70$ & $3.1\times10^{-6}$ & $1.9\times10^{-5}$ & $37$ & $10.8$ \\ [0.1cm]
$10^{-10}$ & $5.18 \, 10^6$ & $120$ & $1.7\times10^{-9}$ & $8.5\times10^{-9}$ & $37$ & $45.4$ \\ [0.1cm]
$10^{-10}$ & $1.75 \, 10^7$ & $180$ & $1.7\times10^{-13}$ & $9.9\times10^{-13}$ & $37$ & $148.5$ \\ [0.1cm]
\hline
$1$ & $1.03 \, 10^6$ & $70$ & $2.2\times10^{-6}$ & $1.0\times10^{-5}$ & $48$ & $14.4$ \\ [0.1cm]
$1$ & $5.18 \, 10^6$ & $120$ & $1.2\times10^{-9}$ & $4.3\times10^{-9}$ & $48$ & $57.2$ \\ [0.1cm]
$1$ & $1.75 \, 10^7$ & $180$ & $1.2\times10^{-13}$ & $6.4\times10^{-13}$ & $50$ & $198.1$ \\ [0.1cm]
\hline
$5$ & $1.03 \, 10^6$ & $70$ & $1.8\times10^{-6}$ & $7.7\times10^{-6}$ & $2125$ & $611.8$ \\ [0.1cm]
$5$ & $5.18 \, 10^6$ & $120$ & $9.6\times10^{-10}$ & $3.0\times10^{-9}$ & $2081$ & $2378.1$ \\ [0.1cm]
$5$ & $1.75 \, 10^7$ & $180$ & $8.9\times10^{-14}$ & $3.6\times10^{-13}$ & $2105$ & $7948.1$ \\ [0.1cm]
\hline
\end{tabular}
\label{table3}
\end{table}

\begin{table}[H]
\caption{Summary of numerical results for the anisotropic
permittivity tensor $\epsilon_{dense}$. See 
Table \ref{table2} for an explanation of the column headers.}
\centering
\begin{tabular}{crccccc}  \\ [-0.3cm]
\hline
Size $\left( \frac{\omega}{2 \pi} \right)$ 
& $N_{tot}$ & $n_{side}$ & $E_2$ & $E_{\infty}$ & $N_{matvec}$ & $Time$ $(s)$ \\ 
\hline \\ [-0.3cm]
$10^{-50}$ & $1.03 \, 10^6$ & $70$ & $1.3\times10^{-6}$ & $7.7\times10^{-6}$ & $37$ & $10.6$ \\ [0.1cm]
$10^{-50}$ & $5.18 \, 10^6$ & $120$ & $4.1\times10^{-10}$ & $1.8\times10^{-9}$ & $37$ & $45.3$ \\ [0.1cm]
$10^{-50}$ & $1.75 \, 10^7$ & $180$ & $2.0\times10^{-14}$ & $1.6\times10^{-13}$ & $37$ & $149.2$ \\ [0.1cm]
\hline
$10^{-10}$ & $1.03 \, 10^6$ & $70$ & $1.3\times10^{-6}$ & $7.7\times10^{-6}$ & $37$ & $10.8$ \\ [0.1cm]
$10^{-10}$ & $5.18 \, 10^6$ & $120$ & $4.1\times10^{-10}$ & $1.8\times10^{-9}$ & $37$ & $45.2$ \\ [0.1cm]
$10^{-10}$ & $1.75 \, 10^7$ & $180$ & $2.0\times10^{-14}$ & $1.6\times10^{-13}$ & $37$ & $147.9$ \\ [0.1cm]
\hline
$1$ & $1.03 \, 10^6$ & $70$ & $1.3\times10^{-6}$ & $6.9\times10^{-6}$ & $50$ & $14.7$ \\ [0.1cm]
$1$ & $5.18 \, 10^6$ & $120$ & $4.1\times10^{-10}$ & $2.6\times10^{-9}$ & $51$ & $61.0$ \\ [0.1cm]
$1$ & $1.75 \, 10^7$ & $180$ & $1.9\times10^{-14}$ & $1.1\times10^{-13}$ & $51$ & $202.0$ \\ [0.1cm]
\hline
$5$ & $1029000$ & $70$ & $1.7\times10^{-6}$ & $1.1\times10^{-5}$ & $1447$ & $412.6$ \\ [0.1cm]
$5$ & $5.18 \, 10^6$ & $120$ & $5.1\times10^{-10}$ & $2.9\times10^{-9}$ & $1474$ & $1697.6$ \\ [0.1cm]
$5$ & $1.75 \, 10^7$ & $180$ & $1.3\times10^{-13}$ & $2.3\times10^{-13}$ & $1478$ & $5687.2$ \\ [0.1cm]
\hline
\end{tabular}
\label{table4}
\end{table}

\begin{table}[H]
\caption{Summary of numerical results for the locally isotropic
permittivity tensor $\epsilon_{444}$. See 
Table \ref{table2} for an explanation of the column headers.}
\centering
\begin{tabular}{crccccc}  \\ [-0.3cm]
\hline
Size $\left( \frac{\omega}{2 \pi} \right)$ 
& $N_{tot}$ & $n_{side}$ & $E_2$ & $E_{\infty}$ & $N_{matvec}$ & $Time$ $(s)$ \\ 
\hline \\ [-0.3cm]
$10^{-50}$ & $3.75 \, 10^5$ & $50$ & $8.4\times10^{-5}$ & $4.5\times10^{-4}$ & $37$ & $5.0$ \\ [0.1cm]
$10^{-50}$ & $3.00 \, 10^6$ & $100$ & $3.8\times10^{-8}$ & $2.1\times10^{-7}$ & $38$ & $27.9$ \\ [0.1cm]
$10^{-50}$ & $1.01 \, 10^7$ & $150$ & $1.5\times10^{-11}$ & $9.3\times10^{-11}$ & $38$ & $87.7$ \\ [0.1cm]
$10^{-50}$ & $3.19 \, 10^7$ & $220$ & $8.5\times10^{-15}$ & $1.5\times10^{-14}$ & $38$ & $271.9$ \\ [0.1cm]
\hline
$10^{-10}$ & $3.75 \, 10^5$ & $50$ & $8.4\times10^{-5}$ & $4.5\times10^{-4}$ & $37$ & $5.1$ \\ [0.1cm]
$10^{-10}$ & $3.00 \, 10^6$ & $100$ & $3.8\times10^{-8}$ & $2.1\times10^{-7}$ & $38$ & $27.9$ \\ [0.1cm]
$10^{-10}$ & $1.01 \, 10^7$ & $150$ & $1.5\times10^{-11}$ & $9.3\times10^{-11}$ & $38$ & $88.7$ \\ [0.1cm]
$10^{-10}$ & $3.19 \, 10^7$ & $220$ & $8.6\times10^{-15}$ & $1.5\times10^{-14}$ & $38$ & $273.9$ \\ [0.1cm]
\hline
$1$ & $3.75 \, 10^5$ & $50$ & $6.9\times10^{-5}$ & $2.5\times10^{-4}$ & $61$ & $7.9$ \\ [0.1cm]
$1$ & $3.00 \, 10^6$ & $100$ & $3.1\times10^{-8}$ & $1.3\times10^{-7}$ & $69$ & $49.5$ \\ [0.1cm]
$1$ & $1.01 \, 10^7$ & $150$ & $1.2\times10^{-11}$ & $4.5\times10^{-11}$ & $72$ & $161.5$ \\ [0.1cm]
$1$ & $3.19 \, 10^7$ & $220$ & $1.6\times10^{-13}$ & $1.7\times10^{-13}$ & $68$ & $472.3$ \\ [0.1cm]
\hline
$5$ & $3.75 \, 10^5$ & $50$ & $6.7\times10^{-5}$ & $2.4\times10^{-4}$ & $5774$ & $704.0$ \\ [0.1cm]
$5$ & $3.00 \, 10^6$ & $100$ & $2.9\times10^{-8}$ & $1.3\times10^{-7}$ & $5418$ & $3768.8$ \\ [0.1cm]
$5$ & $1.01 \, 10^7$ & $150$ & $1.2\times10^{-11}$ & $4.4\times10^{-11}$ & $5251$ & $11489.8$ \\ [0.1cm]
$5$ & $3.19 \, 10^7$ & $220$ & $2.3\times10^{-12}$ & $3.1\times10^{-12}$ & $5410$ & $36594.8$ \\ [0.1cm]
\hline
\end{tabular}
\label{table5}
\end{table}

\begin{figure}[H]
\begin{center}
\includegraphics[width=4in]{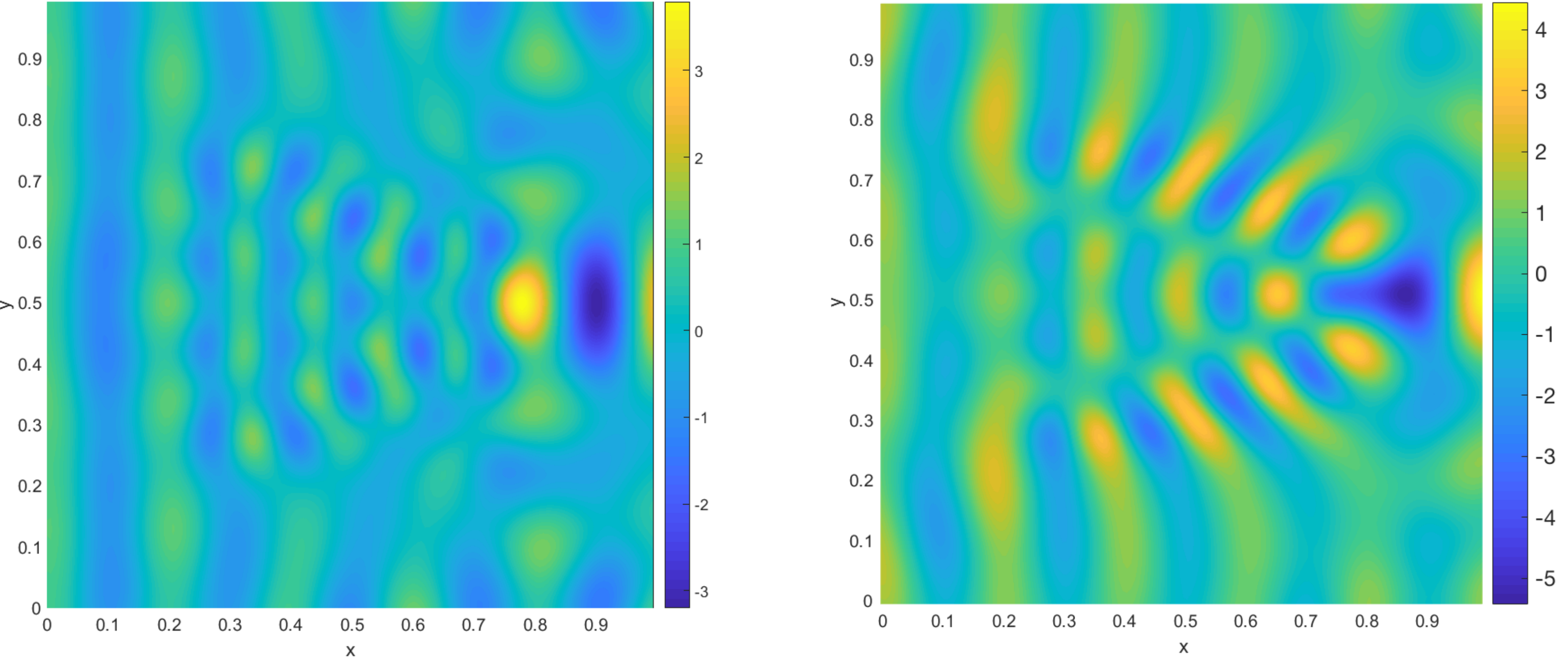} 
\end{center}
\caption{A slice of the $z$-component of the electric field at $z=1/2$ (left)
when solving the Maxwell scattering problem with $\omega=10 \pi$, 
$\mu=I$ and $\epsilon = \epsilon_{dense}$ defined in \eqref{eps_dense}
using a $150\times 150\times 150$ grid.
A slice of the real part of the acoustic field at $z=1/2$ (right) 
when solving the
Helmholtz scattering problem with $\omega=10 \pi$, 
and $\epsilon = \epsilon_{234}$ defined in \eqref{eps}
using a $150\times 150\times 150$ grid.
}
\end{figure}

\section{Discussion}

We have presented a collection of Fredholm integral equations
for electrostatic, acoustic and electromagnetic scattering 
problems in anisotropic, inhomogeneous media.
In the electrostatic and acoustic cases, our approach appears to be new,
and involves recasting the scalar problem of interest in terms of
a vector unknown. We have shown that high order accuracy can be 
achieved using the truncated kernel method of 
\cite{Volintfft} and the FFT. We have also shown
that problems with low or moderate contrast 
are rapidly solved using the Bi-CGStab iterative method,
even with nearly one billion unknowns on a single multicore workstation.
Once the domain is several wavelengths in size, however,
and the contrast is large, we have found that both Bi-CGStab and GMRES
perform rather poorly.
In our largest high-contrast example, 
the scatterer is approximately 1000 cubic wavelengths in size, and 
iterative methods would be expected to require 
many iterations to converge.
This suggests two avenues for further research:
either the development of fast, direct solvers 
(the truncated kernel method of \cite{Volintfft} can provide explicit
matrix entries) or a preconditioning strategy suitable for this class
of problems. We are currently investigating both lines of research and
will report our progress at a later date.

\bibliographystyle{unsrt}
\bibliography{referencias}

\end{document}